\numberwithin{equation}{section}
\newcommand{\commentout}[1]{}
\def \Rset {{\mathbb R}}
\def \Zset {{\mathbb Z}}
\def \Nset {{\mathbb N}}
\newcommand{\be}{\begin{equation}}
\newcommand{\ee}{\end{equation}}
\newcommand{\ba}{\begin{eqnarray}}
\newcommand{\ea}{\end{eqnarray}}
\newcommand{\bi}{\begin{itemize}}
\newcommand{\ei}{\end{itemize}}
\newcommand{\br}{\begin{eqnarray}}
\newcommand{\er}{\end{eqnarray}}
\newtheorem{theo}{Theorem}[section]
\newtheorem{defin}{Definition}[section]
\newtheorem{lem}{Lemma}[section]
\newtheorem{cor}{Corollary}[section]
\newtheorem{rmk}{Remark}[section]
\definecolor{darkgreen}{rgb}{0,0.5,0}
\definecolor{darkblue}{rgb}{0,0,0.7}
\definecolor{darkred}{rgb}{0.9,0.1,0.1}
\definecolor{lightblue}{rgb}{0,0.51,1}
\newcommand{\wj}[1]{{\color{darkred}{#1}}}
\begin{document}
\title{\Large Does Yakhot's 
growth law for\\ turbulent burning velocity hold?}
\author{Wenjia Jing\thanks{Yau Math Sciences Center,
Tsinghua University, Beijing 100084, China.}, Jack Xin\thanks{Department of Mathematics, UC Irvine, Irvine, CA 92697, USA. }, and Yifeng Yu$^{\dagger}$}

\date{}
\maketitle
\thispagestyle{empty}
\begin{abstract}
Using formal renormalization theory,  Yakhot 
derived in (\cite{Yak}, 1988)  an $O\left(\frac{A}{\sqrt{\log A}}\right)$ growth  law of the turbulent flame speed with respect to large flow intensity $A$ based on the inviscid G-equation.  Although this growth law is widely cited in combustion literature, there has been no rigorous mathematical discussion to date about its validity. As a first step towards unveiling the mystery,  
we prove that  there is no intermediate growth law between $O\left(\frac{A}{\log A}\right)$ and $O(A)$  for  two dimensional incompressible Lipschitz continuous periodic flows with bounded swirl sizes. In particular, we do not assume the non-degeneracy of critical points. Additionally, other examples of flows with lower regularity, Lagrangian chaos, and related phenomena are also discussed. 
\end{abstract}
\thispagestyle{empty}

\hspace{.1 in} {\bf Keywords:}
Lipschitz continuous flows, level set G-equation,

\hspace{.1 in} control formula/paths, travel times, front speed growth laws. 
\medskip

\hspace{.1 in} {\bf MSC2020}: 35B27, 35B40, 35F21.

\newpage 
\setcounter{page}{1}
\section{Introduction} A central problem in the study of turbulent combustion is ``{\it how fast can it burn?}" (\cite{R}) due to its close connection with the efficiency of combustion engines.   In particular, it is important  to understand how the increase of the flow intensity  could enhance the turbulent flame speed $s_{\rm T}$.   This  has been studied extensively in combustion literature via theoretical, direct numerical simulations (DNS) and experimental approaches. For theoretical study, a common approach, called passive scalar models, is to decouple fluid and chemical reaction in the combustion process by prescribing the fluid velocity.   A popular platform to do this by ``{\it pencil and paper}"  is the so called G-equation model, which we now provide a brief review below. 

The G-equation is based on the simplest motion law that prescribes the normal velocity ($v_n$) of the moving interface
 to be the sum of the local burning speed ($s_l$)
and the projection of the fluid velocity $V$ along the normal $n$:
$$
V_n=s_l+V(x)\cdot n.
$$

\begin{figure}[th]
\begin{center}
\includegraphics[width=0.6\textwidth]{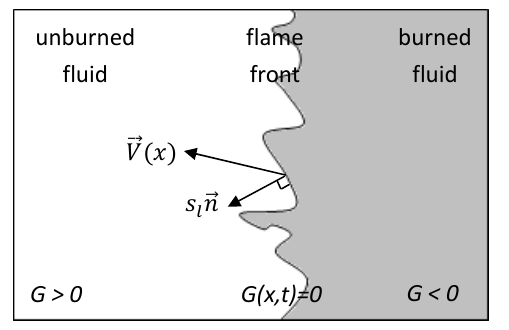}
\end{center}
\caption{\label{fig:Geq}G-equation model.}
\end{figure}

For a given moment $t$, let the flame front be the zero level set of a function
$G(x,t)$. Then the burnt region is $G(x,t) < 0$, the unburnt region is $G(x,t) > 0$,  
the normal direction of their interface pointing from the burnt region to the
unburnt region is $DG/|DG|$, and the normal velocity is $-G_t/|DG|$.
The motion law becomes the so called $G$-equation, a well-known model in turbulent combustion
\cite{W85,Pet00}:
\be
G_t + V(x)\cdot DG + s_l |DG|=0. \label{ge1}
\ee
See Fig.\,\ref{fig:Geq}. Chemical kinetics and Lewis number effects are all included in the laminar speed $s_l$ which is
provided by a user.  In general, $s_l$ might not be a constant.  Throughout this paper,  we consider the basic G-equation by setting $s_l=1$.  The most general G-equation model where $s_l$ incorporates both curvature and strain rate effects was formally introduced by Williams in \cite{W85} in 1985. An earlier form of
G-equation and the associated level set approach appeared in Markstein’s work \cite{M1951} in 1951 and \cite{M1964} in 1964.  G-equation also serves as one of the main computational examples in
the systematic development of level-set method by Osher-Sethian \cite{OS1988}.

{\bf The prediction of the turbulent flame speed is a fundamental problem in turbulent combustion theory} \cite{W85,R,Pet00}.  Roughly speaking, the turbulent flame speed is the averaged flame propagation speed under the influence of strong flows.  Under the G-equation model, the turbulent flame speed $s_{\rm T}(p)$ along a given unit direction $p\in \Rset^n$ is given by
\be\label{limitdef}
s_{\rm T}(p) := \lim_{t\to +\infty}-\frac{G(x,t;p)}{t},
\ee
where the convergence on the right hand side holds locally uniformly for all $x\in \Rset ^n$, and, importantly, is independent of $x$.  Here $G(x,t;p)$ is the unique viscosity solution of
equation (\ref{ge1}) with initial data $G(x,0;p)=p\cdot x$. For  simplicity of notations, we often use $G(x,t)$ without writing explicitly the dependence on $p$. In combustion literature, the turbulent flame speed is often defined and experimentally measured by the ratio between areas of the wrinkled flame front and its projection to the plane $p\cdot x=0$.  This is consistent with (\ref{limitdef}) under the G-equation model \cite{KAW1988,XYR2023}.

  The existence of the limit (\ref{limitdef})  has been independently established in  \cite{CNS}  and \cite{XY_10} for Lipschitz continuous, periodic, and near incompressible flows in all dimensions.  In  homogenization theory, $s_{\rm T}(p)$ is called the ``effective Hamiltonian", which is the unique number such that the following cell problem
 \be\label{eq:cell}
 |p+Dw|+V(x)\cdot (p+Dw)=s_{\rm T}(p) \quad \text{in $\Rset^n$}
 \ee
 has approximate periodic viscosity solutions. The function $s_{\rm T}(p): \Rset^n \to \Rset$ is known to be convex and positive homogeneous of degree 1 (i.e., $s_{\rm T}(\lambda p)=\lambda s_{\rm T}(p)$ for all $\lambda\geq 0$).  See the survey paper \cite{XYR2023} for review of homogenization theory and viscosity solutions.

Now we change $V$ to $AV$ for a constant $A>0$ that is called flow intensity (or stirring intensity), and let $s_{\rm T}(p,A)$ be the corresponding turbulent flame speed.  A practically significant and mathematically interesting question is to determine the growth law of  $s_{\rm T}(p,A)$ as $A\to +\infty$.  The increase of $A$ is often achieved by mechanically rotating fluid within the combustion chamber \cite{HZ1994}.    By applying  the renormalization theory to the inviscid G-equation model,  Yakhot \cite{Yak} formally derived  the following growth law
$$
s_{\rm T}(p,A)=O\left(\frac{A}{\sqrt{\log A}}\right),
$$
assuming that the flow $V$ is statistically isotropic.   The above law has been  considered as a benchmark in combustion literature. 
 
 A natural question is whether this $O\left({A/\sqrt{\log A}}\right)$ growth law can be rigorously established for a class of mathematically interesting and physically meaningful  flows $V$. The first thought is to look at isotropic stochastic flows. However, these types of flows are usually only H\"older continuous in spatial variables, where the well-posedness of the equation (\ref{ge1}) is not clear. The pure transport equation $u_t+V(x)\cdot Du=0$  was known to have multiple solutions with given initial data when $V$ is merely H\"older continuous. See \cite{CL1983,DEIJ2022} for non-uniqueness  examples. To avoid the well-posedness issue and unknown existence of $s_{\rm T}(p)$, we consider Lipschitz continuous $V$  throughout this paper.  A slightly weaker notion is log-Lipschitz continuity, see Remark \ref{rmk:log-lip}.  Physically, Lipschitz continuous flows
sit in the Batchelor (smooth) regime of turbulence flows \cite{BGK_98,Son_99}.
 
 As the first step,  we focus on two dimensional periodic Lipschitz continuous flows with mean zero.  Below are three concrete examples. I and II   appear often  in math and physics literature \cite{CG}.

 \medskip
 
 \begin{itemize}
 
 \item[{\upshape (I)}] \emph{Cellular flows}.  A prototypical example is 
 \be\label{eq:example}
 V(x)=(-H_{x_2},H_{x_1}) \quad \text{ for $H(x)=\sin (2\pi x_1)\sin (2\pi x_2)$}.
 \ee
Here $x=(x_1,x_2)$. The associated growth law was known to be
\begin{equation*}
s_{\rm T}(p,A)=O\left(\frac{A}{\log A}\right);
\end{equation*}
see \cite{CTVV2003, NXY, O2001, XY2013} for reference, and see \cite{XY2013} for the sharp constant. 

\medskip

\item[{\upshape (II)}] \emph{Flows with open channels}. Two representative examples are
\begin{itemize}
\item[(a)] \emph{Shear flow}: $V(x)=(v(x_2),0)$ for a periodic Lipschitz continuous non-constant function $v: \Rset\to \Rset$ that has mean zero.

\item[(b)] \emph{Cat's-eye flows}: $V(x)=(-H_{x_2},H_{x_1})$ where $H$ depends on a parameter $\delta \in (0,1)$ and is given by 
 $$
 H_\delta(x)=\sin (2\pi x_1)\sin (2\pi x_2)+\delta \cos (2\pi x_1)\cos (2\pi x_2).
 $$
\end{itemize}
The associated growth law was known to be 
\be\label{eq:open}
s_{\rm T}(p,A)=
\begin{cases}
O(1)  \quad &\text{if } p\cdot p_0=0,\\
O(A)\quad &\text{if } p\cdot p_0\not =0, 
\end{cases}
\ee
where $p_0$ is a unit vector that is parallel to the direction of the open channel. For example, $p_0 = (1,0)$ for the shear flow above, and $p_0 = \frac{1}{\sqrt{2}}(1,1)$ for the cat's eye flow above. See \cite{XY2014} for more detailed descriptions.

\begin{figure}[th]
\begin{center}
\includegraphics[width=0.25\textwidth]{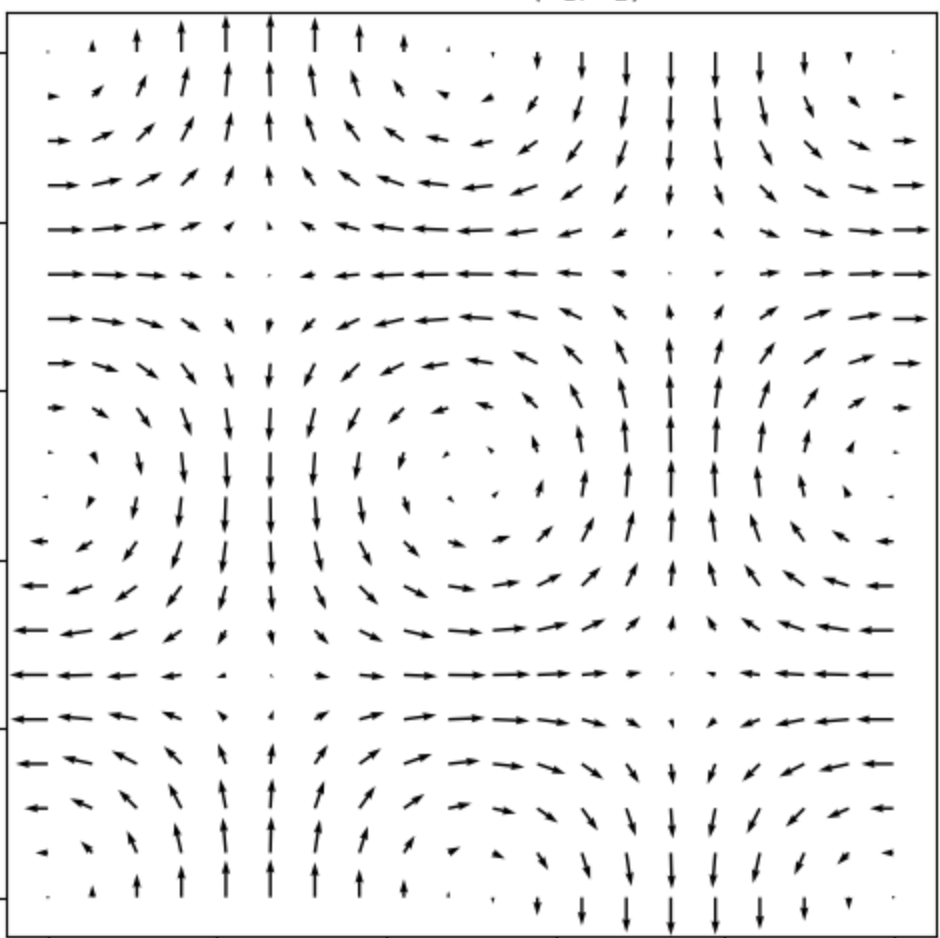}\qquad
\includegraphics[width=0.25\textwidth]{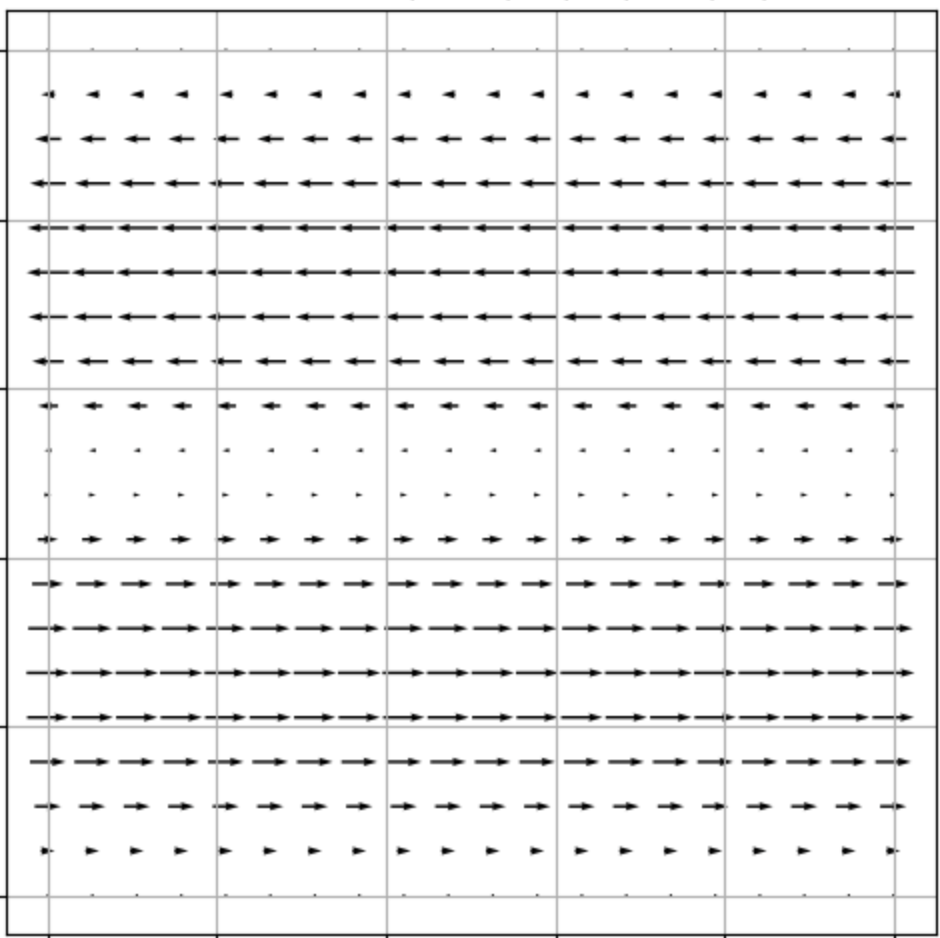}\qquad 
\includegraphics[width=0.25\textwidth]{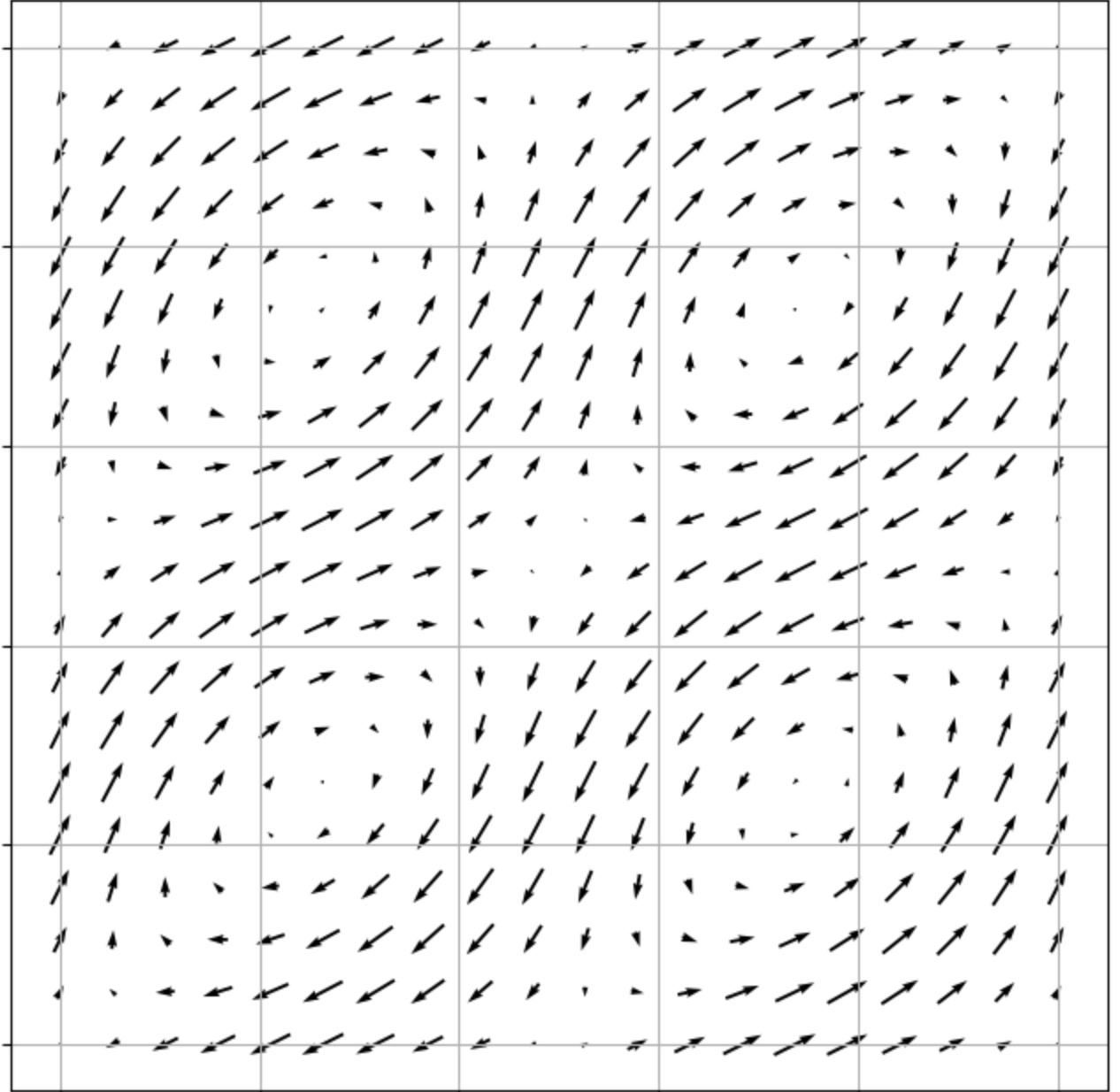}
\end{center}
\caption{\label{fig:threeflow}Cellular flow, shear flow and cat's-eye flow ($\delta=1/2$)}
\end{figure}

 \item[{\upshape (III)}] {\it Two-scale flow.}  An observation made in \cite{CTVV2003} was that introducing additional scales does not significantly affect the growth law. However, this may not always hold true. For example, consider  \( V(x) = (-H_{x_2}, H_{x_1}) \), where \( H \) is defined as
\be\label{eq:twoscale}
H(x) = \sin(2\pi x_1) \sin(2\pi x_2) + 0.3 \cos(6\pi x_1) \cos(6\pi x_2),
\ee
and compare it with the cellular flow \eqref{eq:example}. The inclusion of the smaller-scale term creates a flow with an open channel structure along the direction \( p_0 = \frac{1}{\sqrt{2}}(1,1) \). See Figure \ref{fig:twoscaleflow} below for level curves of $H$, which has both global and local extrema. 

 \begin{figure}[th]
\begin{center}
\includegraphics[width=0.5\textwidth]{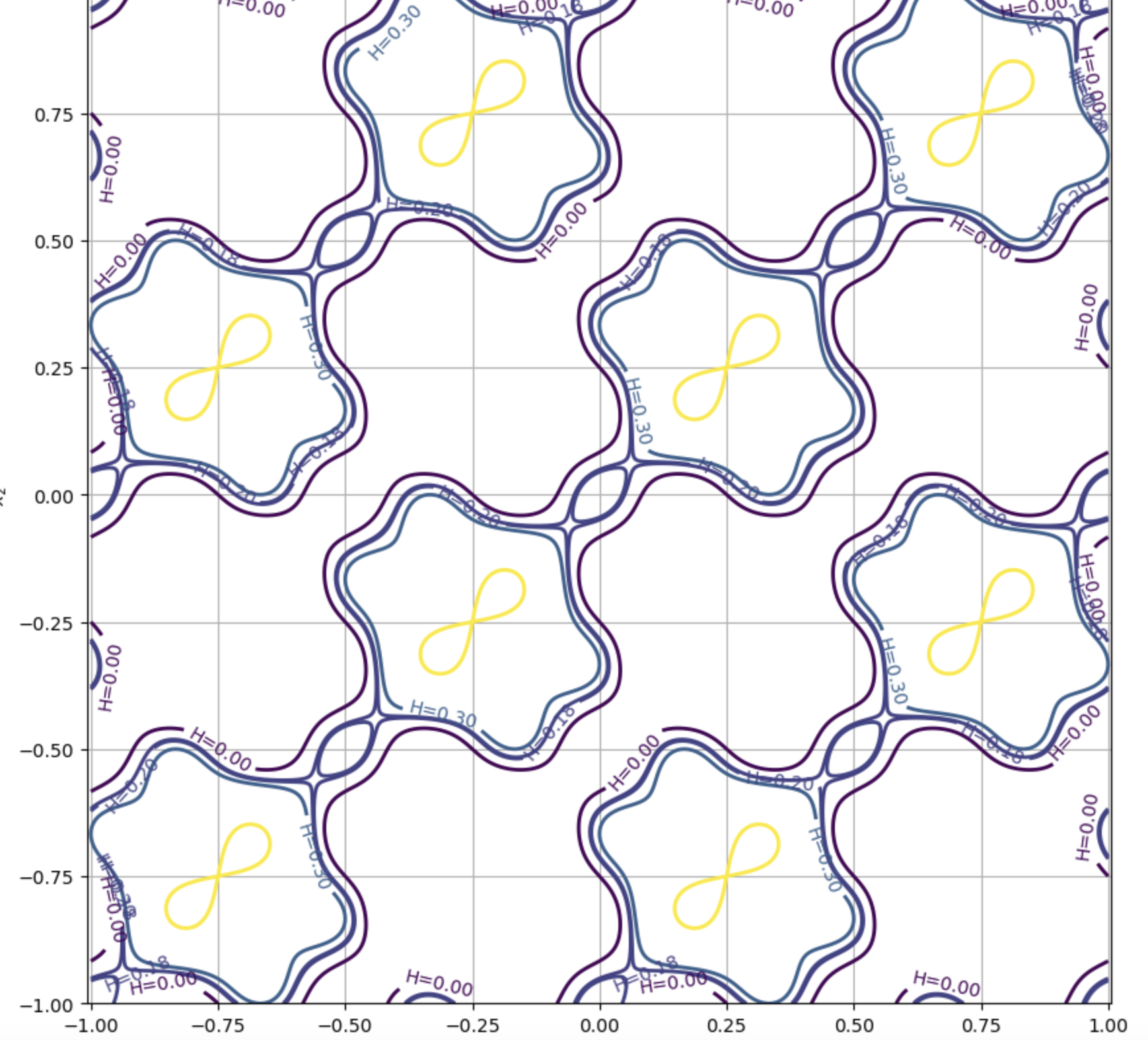}
\end{center}
\caption{\label{fig:twoscaleflow}Level set of $H$ from (\ref{eq:twoscale})}
\end{figure}
\end{itemize}

To differentiate an $O\left(\frac{A}{ \sqrt{\log A}}\right)$ rate from an $O\left(\frac{A}{\log A}\right)$ one requires  large values of $A$, which is very challenging to carry out numerically or empirically.  Since the cellular flow has only one scale,  it is discussed in \cite{CTVV2003} via numerical computations whether adding more scales could lead to Yakhot's $O\left(\frac{A}{\sqrt{\log A}}\right)$ law. In this paper, we will show that {\it there is no intermediate growth law between $O(A)$ and $O\left(\frac{A}{\log A}\right)$ for two-dimensional Lipschitz continuous incompressible flows with bounded swirl sizes}.  

Let us first introduce some notations and concepts before stating the main theorem.  Denote by $\mathbb{T}^n=\Rset^n/\Zset^n$ the $n$-dimensional flat torus and $W^{1,\infty}(\mathbb{T}^n, \Rset^n)$ the set of Lipschitz continuous $\Zset^n$-periodic functions from $\Rset^n$ to $\Rset^n$.   
 
Given a flow $V\in W^{1,\infty}(\mathbb{T}^2, \Rset^2)$, a curve  $\xi\in C^{1,1}(\Rset, \Rset^2)$ is said to be  an  orbit of the flow $V$ if
 $$
 \dot \xi=V(\xi(t)) \quad \text{for all $t\in \Rset$}.
 $$
 We say that two orbits $\xi_1$ and $\xi_2$ are the same if one is a time translation of the other, i.e.  $\xi_1(t)=\xi_2(t+t_0)$ for all $t\in \Rset$ and a fixed $t_0\in \Rset$.  Moreover,  a subset $E \subseteq \Rset^2$ is called flow invariant if for all $x\in E$, $\xi(\cdot;x)(\Rset)\subseteq E$. Here, $\xi(\cdot;x)$ is the orbit subject to $\xi(0;x)=x$. We call the image of an orbit $\xi$ a streamline of $V$.

An orbit $\xi$ of $V$ is called closed if $\xi (0)=\xi(T)$ for some $T \in (0,\infty)$ and $T$  is called a period of the closed orbit. The set of stagnation points (i.e., critical points) $\Gamma$ of $V$ is defined by 
\begin{equation}
\label{eq:stagset}
\Gamma=\{x\in \Rset^2|\   V(x)=0\}.
\end{equation}
 We say that an orbit $\xi$ is asymptotic to $\Gamma$ if 
 \be\label{eq:asym}
 \lim_{t\to \pm \infty}d(\xi(t), \Gamma)=0.
 \ee
 
By \emph{swirl} of the flow $V$, we mean the image of a closed orbit $\xi$ of $V$. Throughout this paper,  for technical convenience, we assume that the sizes of swirls of $V$  have an upper bound; that is, there exists $M>0$ such that for any closed orbit $\xi$, 
 \be\label{eq:boundedswirl}
\max_{0\leq t_1,t_2\leq \Rset} |\xi(t_1)-\xi(t_2)|\leq M.
 \ee
  This assumption matches what happens in all real situations of turbulent combustion because the size of the swirl cannot go beyond a multiple of the diameter of the combustion chamber.  Also,  we assume that $V$ has mean zero, i.e., 
 \be\label{eq:meanzero}
 \int_{\Rset^2}V(x)\,dx=0,
 \ee
 which is consistent with the isotropic assumption in \cite{Yak}.  From a mathematical point of view,  the mean zero assumption ensures that $s_{\rm T}(p)$ is positive in all directions. Indeed, taking integration on both sides of  the cell problem (\ref{eq:cell}) leads to (note $\nabla \cdot V = 0$ due to incompressibility) 
 $$
 s_{\rm T}(p)\geq \, |p|,
 $$
which shows the enhancement of flame propagation speed due to the presence of the flow $V$. The following is our main result.

 \begin{theo}\label{theo:main} Assume that $V\in W^{1,\infty}(\mathbb{T}^2,  \Rset^2)$ is incompressible, mean zero and its swirls have uniformly bounded sizes. That is, $\mathrm{div}(V)=0$, a.e., \eqref{eq:boundedswirl} and \eqref{eq:meanzero} hold.  Then, 
either (1) or (2) in the following holds:
 
\begin{itemize}
\item[\upshape(1)] There exists a unit vector $p_0\in \Rset^2$ such that
$$
\begin{cases}
\displaystyle \lim_{A\to +\infty} \frac{s_{\rm T}(p,A)}{A} =c_p \quad &\text{if } \; p\cdot p_0\not=0,\\
\displaystyle \sup_{A\geq 0} \; s_{\rm T}(p,A) <\infty \quad &\text{if } \; p\cdot p_0=0.
\end{cases}
$$
Here $c_p$ is a positive constant depending only on $p$ and $V$. 

\item[\upshape(2)] For every unit vector $p\in \Rset^2$, 
 $$
 \limsup_{A\to +\infty} \frac{s_{\rm T}(p,A)\log A}{A}<\infty. 
$$
\end{itemize}
\end{theo}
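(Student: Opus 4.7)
The plan is to exploit the two-dimensional incompressible structure through the stream function $H\in W^{2,\infty}(\mathbb{T}^2)$ satisfying $V=(-H_{x_2},H_{x_1})$, so every orbit lies on a level set of $H$. My first task is a topological dichotomy for the lifts of these level sets to $\Rset^2$. Using the bounded swirl assumption \eqref{eq:boundedswirl}, either (i) every connected component of every level set of $H$ has diameter bounded by some uniform $M'$, or (ii) there exist $c_0\in\Rset$ and an unbounded connected component $\Sigma\subset\{H=c_0\}$. In the latter case, invoking $\Zset^2$-periodicity and tracing $\Sigma$ under deck transformations produces a primitive $p_0\in \Zset^2$ with $\Sigma+p_0=\Sigma$; i.e. $V$ possesses an open channel in direction $p_0$. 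These alternatives yield the two cases of the theorem.

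In the \emph{open-channel case}, I use the Hopf--Lax/control representation
$$
s_{\rm T}(p,A)=\lim_{T\to\infty}\frac{1}{T}\sup\{p\cdot\gamma(T):\gamma(0)=0,\ |\dot\gamma(t)-AV(\gamma(t))|\le 1\}.
$$
The field $V$ is tangent and nonzero along $\Sigma$, so its orbit traverses one period $p_0$ in time $T_0=\int_0^{|p_0|}|V|^{-1}\,ds$; under the scaling $AV$ this becomes $T_0/A$. Following this orbit after an $O(1)$ initial cost of reaching $\Sigma$ yields $s_{\rm T}(p,A)\ge (A/T_0)(p\cdot p_0)$ whenever $p\cdot p_0>0$, hence $s_{\rm T}(p,A)\ge c_pA$; the matching upper bound $s_{\rm T}(p,A)\le A\|V\|_\infty+|p|$ is elementary. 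The limit $c_p=\lim_A s_{\rm T}(p,A)/A$ follows by dividing \eqref{eq:cell} by $A$ and identifying the $A\to\infty$ limit as the effective Hamiltonian of the pure transport operator $V\cdot(p+Dw)$, which is well-posed thanks to the channel structure. When $p\cdot p_0=0$, I construct a sublinear corrector $w_p$ adapted to $\Sigma$ so that $|p+Dw_p|+V\cdot(p+Dw_p)$ is uniformly bounded on $\Rset^2$ independently of $A$, yielding $\sup_A s_{\rm T}(p,A)<\infty$.

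In the \emph{no-channel case}, all streamlines have diameter $\le M'$, and a Poincar\'e--Bendixson-type argument (available for Lipschitz planar fields) forces every orbit to be closed or asymptotic to $\Gamma$. Consequently $\Rset^2$ decomposes into bounded cells whose boundaries lie in $\Gamma\cup\{\text{orbits asymptotic to }\Gamma\}$. Any admissible $\gamma$ attaining net displacement of order $R$ in direction $p$ must cross $\gtrsim R/M'$ cell boundaries. Since $V$ is Lipschitz with $V|_\Gamma\equiv 0$, one has $|V(x)|\le L\,\mathrm{dist}(x,\Gamma)$; inside a $\delta$-tube of $\Gamma$ the constraint $|\dot\gamma-AV(\gamma)|\le 1$ gives $|\dot\gamma|\le AL\delta+1$, and a Gr\"onwall-type estimate shows that escaping such a tube after entering at distance $\sim 1/A$ takes time $\gtrsim (\log A)/A$. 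Summing over the $\gtrsim R/M'$ crossings, the total travel time for displacement $R$ is $\gtrsim R(\log A)/A$, hence $s_{\rm T}(p,A)\le CA/\log A$.

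The main obstacle is extracting the clean $\log A$ factor in Case 2 without any non-degeneracy hypothesis on critical points of $H$. Earlier proofs for cellular-type flows (\cite{NXY,XY2013}) assume $H$ is Morse, so that $\Gamma$ is finite, the heteroclinic skeleton is a smooth graph, and the logarithm arises as a saddle-residue computation. Here $\Gamma$ may have positive two-dimensional measure and arbitrarily degenerate topology; I must replace the Morse machinery by the universal Lipschitz bound $|V|\le L\,\mathrm{dist}(\cdot,\Gamma)$ combined with a covering of $\{\mathrm{dist}(\cdot,\Gamma)\le \delta\}$ at scale $\delta=O(1/A)$, so that no admissible trajectory can shortcut around $\Gamma$. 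Handling the possibility that $\Sigma$ accumulates on $\Gamma$ in the dichotomy step, and constructing the sublinear corrector for $p\cdot p_0=0$ without smooth foliation structure, are additional technical hurdles that must be navigated by careful use of the stream function's constancy along orbits.
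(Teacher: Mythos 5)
Your dichotomy is not the right one, and it misclassifies the basic examples. You split according to whether some connected component of a level set of $H$ is unbounded, and in the unbounded case you claim ``$V$ is tangent and nonzero along $\Sigma$'' so that a trajectory rides $\Sigma$ at speed $\sim A$. But an unbounded level component need not be (or contain) a non-contractible periodic orbit: for the prototypical cellular flow \eqref{eq:example} the level set $\{H=0\}$ is the unbounded connected separatrix grid passing through the saddle points, $V$ vanishes on it, the travel time $T_0=\int|V|^{-1}ds$ is infinite, and the true growth is $O(A/\log A)$, not $O(A)$. Also, the bounded-swirl hypothesis \eqref{eq:boundedswirl} only bounds closed orbits, so your alternative (i) (``all level components have diameter $\le M'$'') does not follow from it. The correct trichotomy-free split, which the paper uses, is existence versus non-existence of non-contractible \emph{periodic orbits}; in the existence case the linear/bounded dichotomy of case (1) is exactly Theorem 1.2(ii) of \cite{XY2014}, and no corrector construction is needed.

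In the no-channel case your mechanism for extracting the $\log A$ factor has a genuine gap: nothing forces an admissible path to enter a $\delta\sim 1/A$ tube around $\Gamma$ when it crosses a cell boundary. A control path can cross a separatrix at a point where $|V|=O(1)$, at time cost $O(1/A)$, so counting boundary crossings plus the bound $|V|\le L\,\mathrm{dist}(\cdot,\Gamma)$ does not by itself produce $T\gtrsim \log A/A$ per cell. The paper's argument is different and this is its crux: if the crossing time were $\le \log A/A$, then since $\dot H(\gamma)=DH(\gamma)\cdot(\dot\gamma+AV(\gamma))$ the value of $H$ along the path oscillates by at most $K_0\log A/A$; the structure theory of maximal cells and adjacency (last-exit times $T_1<T_2$) guarantees the path joins two boundary points of a cell lying on \emph{different} orbits of the same level, and Lemma \ref{lem:bound} (via the gradient-flow coordinates $\eta_t(s)$) shows such a nearly-level connection must pass through a point with $|V|\lesssim(\log A/A)^{1/3}$; only then does the ODE/escape-time estimate around that point give $T\gtrsim \log A/A$. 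Your proposal contains neither the ``different orbits at the same level'' reduction nor the quantitative small-$|V|$ lemma, so the central estimate $s_{\rm T}\le CA/\log A$ is not established; the concluding claim about case (1) when $p\cdot p_0=0$ (a uniformly bounded corrector) is likewise asserted without construction.
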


\begin{rmk}  Steady two dimensional incompressible periodic flows considered in the above theorem are integrable. Turbulent flows are non-integrable.  It is therefore interesting to explore whether the presence of Lagrangian chaos could enhance the propagation speed and lead to different  growth laws. For representative three dimensional flows like the Arnold-Beltrami-Childress (ABC) flow and the Kolmogorov flow,  $O(A)$ growth law is known in certain situations \cite{XYZ, KLX}.  The linear growth is equivalent to the residue front as laminar flame speed $s_l$ tends to zero. See Remark \ref{rmk:residue}.  In sections 4, we also demonstrate that the growth law associated with the unsteady and mixing cellular flow is not faster than $O(A/\log A)$.  A simple way to generate a growth law like $O(A/\sqrt{\log A})$ is to slightly reduce the regularity of the flow $V$ from Lipschitz continuity which is essentially the Batchelor (smooth) regime of turbulence flow \cite{BGK_98,Son_99}.
See Remark \ref{rmk:log-lip} for details. 
A physical example from the class of rough (H\"older continuous) stochastic flows \cite{BGK_98} awaits to be 
found to support the growth law $O(A/\sqrt{\log A})$.

\end{rmk}

$\bullet$ {\bf Sketch of the proof}. According to the control interpretation of solutions to convex Hamilton-Jacobi equations \cite{Evans_98}, the solution $G(x,t)$ of equation (\ref{ge1}) with a Lipschitz initial data $G(x,0)=g(x)$ has a representation formula
\be\label{eq:control-general}
-G(x,t)=\sup_{\gamma \in \Sigma_t}\left(-g(\gamma (t))\right),
\ee
where $\Sigma_t$ is the set of all
 Lipschitz continuous curves $\gamma$ (control paths) defined on $[0,t]$   satisfying $\gamma(0)=x$ and $|\dot \gamma(\cdot)+V(\gamma(\cdot))|\leq 1$, a.e. in $[0,t]$. In particular,  the solution $G(x,t;p)$ with initial data $g=p\cdot x$ is given by 
\be\label{eq:control}
-G(x,t;p)=\sup_{\gamma \in \Sigma_t}\left(-p\cdot \gamma (t)\right),
\ee

First, we quickly recall how to obtain the bound $O(A/\log A)$ for the cellular flow in (\ref{eq:example}) and refer to \cite{NXY, O2001, XY2013} for the details. The method relies on the available simple explicit formulation.  For $\eta\in \Sigma_t$, write $\eta(s)=(x_1(s),x_2(s))$.  Then we have
  $$
 | \dot x_1(s)|\leq 2\pi A |\sin (2\pi x_1(s))|+1 \quad \text{for a.e. $s\in [0,t]$},
 $$
 and, hence, for $A\geq 2$ it takes at least 
 $$
 \int_{0}^{1} \frac{1}{1+2\pi A |\sin (2\pi x_1)|}\,dx_1=O\left(\frac{\log A}{ A}\right) 
 $$
{amount of time} for $\eta$ to pass the stripe bounded by two lines $x_1=k$ and $x_1=(k+1)$ for each $k\in \Zset$. Due to \eqref{limitdef} and \eqref{eq:control}, this immediately leads to the upper bound
 $$
 s_{\rm T}(e_1,A)\leq O(A/\log A).
 $$
A similar argument holds for the $x_2$ direction. By choosing a suitable control path $\gamma$ that travels close to the separatrices (level curves of the critical value $H=0$), we can also get $s_{\rm T}(p,A)\geq O(A/\log A)$.  To obtain sharp constants would require a more delicate analysis \cite{XY2013}.

\begin{rmk}\label{rmk:log-lip} The above proof strategy and calculations can be easily extended to handle $V$ with lower regularity. In particular,  the growth law  $O(A/\sqrt{\log A})$ can be obtained mathematically if we lower the Lipschitz regularity of the vector field to $\frac12$-log-Lipschitz although its physical meaning is not clear.  Here we say that a function $f$ is $\alpha$-log-Lipschitz continuous for some $\alpha\geq 0$ if 
$$
|f(x)-f(y)|\leq C|x-y|({-\log |x-y|})^{\alpha} \quad \text{for all $x,y\in \Rset^2$ with $|x-y|\leq 1$}.
$$
We refer to \cite{MR2014} and reference therein for more information on this class of functions. 

For example, we can consider the stream function
$$
H(x_1,x_2)=\sin (2\pi x_1)\sin (2\pi x_2)\cdot
\sqrt{-\log \left(\frac{\sin^2 (2\pi x_1)+\sin^2 (2\pi x_2)}{4}\right)}.
$$
and $V=(-H_{x_2},H_{x_1})$. Simple computations show that  for $i=1,2$
$$
|H_{x_i}|\leq C|\sin 2\pi x_{i'}|\sqrt{-\log |\sin 2\pi x_{i'}}|, 
$$
where $i' = \{1,2\}\setminus \{i\}$ is the complementary index of $i\in \{1,2\}$. Also, we get
$$
V(x_1,0)=\left(-2\pi\sin (2\pi x_1)\sqrt{-2\log \left({\sin (2\pi x_1)\over 2}\right)},\,  0\right)
$$
and
$$
V(0,x_2)=\left(0, \, 2\pi\sin (2\pi x_2)\sqrt{-2\log \left({\sin (2\pi x_2)\over 2}\right)}\right). 
$$
Hence $V$  is $C^{\infty}$ on $\Rset^2\backslash {\Zset^2\over 2}$ and is $1\over 2$-log-Lipschitz in $\Rset^2$. For $\alpha\in [0,1]$,  if $V$ is $\alpha$-log-Lipschitz continuous,  it is not hard to show that $G(x,t)$ given by the control formulation (\ref{eq:control-general}) is H\"older continuous and is the unique solution to (\ref{ge1}). The existence of $s_T$ can be established using the same method in \cite{XY_10}.  We leave this to the interested readers to explore. Then the $O(A/\sqrt{\log A})$ growth law follows from
$$
\int_{0}^{1}{1\over 1+As\log (-s)}\,ds=O(A/\sqrt{\log A}).
$$

 \end{rmk}

For more general two-dimensional incompressible flow $V$, the key steps to establish the $O(A/\log A)$ growth rate are as follows.
 
\medskip

{\bf Step 1:} \emph{Analyze the cell structures of the streamlines of $V$ away from the set $\Gamma$ of stagnation points defined by \eqref{eq:stagset}.}
For two-dimensional incompressible flow $V$, we can always find a scalar field $H$ so that $V = (-H_{x_2},H_{x_1})$. Hence, the set $\Gamma$ of stagnation points of $V$ are precisely the critical points of $H$. Henceforth, we also refer to the points in $\Gamma$ as critical points. If all critical points are non-degenerate, i.e. $\mathrm{det}(DV(x)) \ne 0$ for all $x\in \Gamma$, the structure of the streamlines is well understood \cite{A1991}:  it consists of finitely many cells bounded by separatrices of $H$. The main novelty of this paper is that we do not assume the non-degeneracy of critical points. Consequently, topologically complicated situations might arise.   For example,   the number of cells and scales might be infinite within one period.  Hence, we need to properly define cells and consider those maximal cells. 
\medskip

{\bf Step 2:} \emph{Establish the result in item (2) of Theorem \ref{theo:main} assuming that there is no non-contractible periodic orbit.} Indeed, according to \cite{XY2014}, case (1) of Theorem \ref{theo:main}, i.e. a dichotomy between $O(A)$ and $O(1)$ growth rates, occurs if and only if there exist non-contractible periodic orbits of $V$; see \eqref{eq:noncon} for the definition. Therefore, we rule out non-contractible periodic orbits. Using the control formulation (\ref{eq:control}), we only need to show that any control path takes at least $O({\log A\over A})$ time to pass a stripe with fixed width $2M+1$ where $M$ is the bound of swirl size in (\ref{eq:boundedswirl}).

There are two main ingredients to achieve the goal: First, structural results from Step 1 ensure that such a path has to travel through a maximal cell within the stripe by connecting two points on the boundary of the cell that are not on the same orbit. Second, Corollary \ref{cor:crosscell} asserts then that for such a path, either the travel time is no less than $\log A\over A$ or  $\gamma$ must contain a point $x_0$ such that 
\begin{equation*}
|V(x_0)|\leq 3K_0\left({K_0\log A\over A}\right)^{1\over 3}
\end{equation*}
where $K_0=\|V\|_{W^{1,\infty}(\Rset^2)}$. Therefore,
$$
|\dot \gamma|\leq AK_0|\gamma(s)-x_0|+C(\log A)^{1\over 3}A^{2\over 3}+1.
$$
Then it takes the path at least 
$$
\int_{0}^{1}{1\over 1+ C+C(\log A)^{1\over 3}A^{2\over 3}+ACr}\,dr {= O\left({\log A\over A}\right)}
$$
amount of time to escape $B_1(x_0)$. The uniform bound of swirl sizes (\ref{eq:boundedswirl}) is only used to control the size of a cell. 

 \medskip
 
\noindent{\bf More notations:} Given a set $D\subseteq \Rset^n$,  $\overline {D}$ and $D^{\circ}$ represent the closure and the interior of $D$ respectively. For two sets, $A\subset B$ means that $A$ is a proper subset of $B$. For a curve $\gamma:\Rset\to \Rset^n$ and any subset $I\subseteq \Rset$, $\gamma(I)=\{\gamma(t)|\ t\in I\}$. 

\medskip
 
\noindent{\bf Outline of the paper.}  In section 2, we study the structure of streamlines of $V$.  In Section 3 we prove Theorem \ref{theo:main} following the aforementioned plan. Examples of two dimensional unsteady cellular flows and three dimensional steady flows are discussed in section 4.

 \section{Structures of the streamlines}
Throughout Sections 3 and 4, we assume that $V$ satisfies the assumptions of Theorem \ref{theo:main}. In this section,  we study the structure of two dimensional periodic incompressible flows without assuming the non-degeneracy of critical points. Some contents might be well known to experts, but are still presented for readers' convenience.  Some results are intuitively clear, but we take effort to prove them rigorously.
  
 For  $V\in W^{1,\infty}(\mathbb{T}^n, \Rset^n)$,   an orbit $\xi:[0,T]\to \Rset$ is called a \emph{non-contractible periodic orbit} if  for some $T>0$, 
 \be\label{eq:noncon}
 \xi(T)-\xi(0)\in \Zset^n\backslash\{0\}.
 \ee
  $T$ is called a  period of $\xi$.  Note that $\xi$ is a non-contractible  periodic orbit if and only if it is a non-contractible closed orbit when it is projected to the flat torus $\mathbb{T}^n$. A point $x\in \Rset^n$ is called a \emph{periodic} point if it is either on a closed orbit or a non-contractible periodic orbit.  More generally,  a point $x\in \Rset^2$ is called \emph{recurrent} if there exist an orbit $\xi(\cdot;x)$ starting from $x$ and a sequence $T_m\to +\infty$ such that 
 $$
 \lim_{m\to +\infty}d(\xi(T_m;x), x+\Zset^n)=0.
 $$

 Owing to Poincar\'e recurrence theorem,  almost every $x\in \Rset^n$ is a recurrent point under the incompressible flow $\dot \xi=V(\xi)$. 
 
 When $n=2$, due to the incompressibility and mean zero assumptions of $V$, there is a scalar field $H\in C^{1,1}(\mathbb{T}^2,\Rset)$, henceforth called the \emph{stream function}, such that  for $x=(x_1,x_2)$,
 $$
 V(x)=D^{\perp}H(x)=(-H_{x_2}, H_{x_1}).
 $$
Apparently, $H$ is constant along any orbit of $V$.   Note that given $x\in \Rset^2\backslash \Gamma$,  there exists a neighbourhood $U_x$  of $x$ such that for any $y\in U_x$, $H(y)=H(x)$ if and only if $y$ and $x$ are on the same orbit. More detailed discussions will be given later.   Hence every recurrent point  is a periodic point.  Note that this coincidence in general is not valid in higher dimensions when $n\geq 3$.   
 
 Hereafter, we assume that $n=2$. 
 
  \begin{lem}\label{lem:threecases} Any orbit $\xi$ belongs to one of the following categories:
  
  (1) $\xi$ is  a closed orbit; 
  
  (2) $\xi$ is  a non-contractible periodic orbit;
  
  (3) $\xi$  is asymptotic  to $\Gamma$, i.e., (\ref{eq:asym}) holds. 
 \end{lem}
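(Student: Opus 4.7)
The strategy is to project the orbit to the torus $\mathbb{T}^2$ and exploit the conservation of the stream function along the flow to reduce the dynamics to a one-dimensional picture via a Poincar\'e--Bendixson style argument. Let $\pi:\Rset^2\to \mathbb{T}^2$ denote the covering projection, set $\tilde\xi=\pi\circ\xi$, and put $c=H(\xi(0))$, so that $\xi(\Rset)\subseteq \{H=c\}$; recall that the critical set of $V=D^\perp H$ coincides with $\{\nabla H=0\}$, whose image on the torus is $\pi(\Gamma)$.

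The central claim is the dichotomy: either $\tilde\xi$ is a periodic orbit on $\mathbb{T}^2$, or both $\omega^\pm(\tilde\xi)\subseteq \pi(\Gamma)$. To prove it, assume $y\in\omega^+(\tilde\xi)$ with $\nabla H(y)\neq 0$. Using the Lipschitz flow-box theorem, choose a short arc $\Sigma$ through $y$ transverse to $V$ and a small time $\varepsilon>0$ such that every orbit entering a small neighborhood $U$ of $y$ crosses $\Sigma$ exactly once within time $\varepsilon$. Since $V=D^\perp H$ is tangent to $\{H=c\}$ at $y$ and $\Sigma$ is transverse to $V$, the level curve $\{H=c\}$ (locally $C^{1,1}$ near $y$ by the implicit function theorem) meets $\Sigma$ transversely; after further shrinking we may arrange $\Sigma\cap\{H=c\}=\{y\}$. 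As $\tilde\xi(t)\in\{H=c\}$ for all $t$, every crossing of $\Sigma$ by $\tilde\xi$ occurs at $y$ itself. Combined with a sequence $t_n\to\infty$ for which $\tilde\xi(t_n)\to y$, this yields times $t_n'\to\infty$ with $\tilde\xi(t_n')=y$, and any two distinct such times produce a nontrivial period of $\tilde\xi$. The same argument applies verbatim to $\omega^-$.

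From the dichotomy the trichotomy in the lemma follows readily. If $\tilde\xi$ is periodic on $\mathbb{T}^2$ with some period $T>0$, then $\xi(T)-\xi(0)\in\Zset^2$: if it vanishes, $\xi$ is a closed orbit in $\Rset^2$ (case (1)); otherwise, $\xi$ is a non-contractible periodic orbit (case (2)). If $\tilde\xi$ is not periodic on $\mathbb{T}^2$, then $\omega^\pm(\tilde\xi)\subseteq\pi(\Gamma)$, and a standard compactness argument on $\mathbb{T}^2$ promotes this to $d(\tilde\xi(t),\pi(\Gamma))\to 0$ as $t\to\pm\infty$, equivalent to $d(\xi(t),\Gamma)\to 0$ in $\Rset^2$ (case (3)). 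The degenerate situation $V(\xi(0))=0$ forces $\xi\equiv\xi(0)\in\Gamma$ by uniqueness and so also falls into case (3).

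The main obstacle is rigorously justifying the transversal argument at merely Lipschitz regularity: we need a usable flow-box around each regular point and a genuinely transverse $C^{1,1}$ level curve there. Both are available from standard tools (the flow of a Lipschitz vector field is Lipschitz and admits a continuous rectifying chart, while $H\in C^{1,1}$ with $\nabla H(y)\neq 0$ gives a local $C^{1,1}$ level curve via the implicit function theorem), so the analytical work is routine. The conceptual heart of the proof is the observation that the conservation of $H$ collapses the first-return map to the transversal $\Sigma$ down to a single point, ruling out the quasi-periodic or Denjoy-type behavior that can arise for general non-integrable flows on $\mathbb{T}^2$.
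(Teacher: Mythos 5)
Your proof is correct and follows essentially the same route as the paper: both hinge on the conservation of $H$ along orbits and the local fact that near a regular point the level set of $H$ coincides with a single orbit (the paper via its gradient-flow coordinates $\Phi(t,\beta)$, you via a transversal segment plus the implicit function theorem), so that a regular point in the limit set forces periodicity on the torus, and otherwise the orbit accumulates only on $\Gamma$. The difference is purely presentational: you argue directly with $\omega$-limit sets on $\mathbb{T}^2$, while the paper argues by contradiction using its recurrent-implies-periodic observation.
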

 
\begin{proof}   Suppose that $\xi$ is neither a  closed orbit  nor a non-contractible  periodic orbit.  The goal is to establish (3).  We argue by contradiction.  If $\xi$ is not asymptotic  to $\Gamma$, then there exists $x_0\in \Rset^2\backslash \Gamma$ and a sequence $T_m\to +\infty$ such that 
 $$
 d(\xi(T_m),x_0+\Zset^2)=0.
 $$
Then $H(\xi(t))=H(x_0)$ for all $t\in  \Rset$. Since $DH(x_0)\not= 0$, similar to the previous discussion about equivalence between recurrent points and periodic points in two-dimensional space, we must have that $x_0$ is on $\xi(\Rset)+\Zset^2$ and is a periodic point. Accordingly,  $\xi$ is either a closed orbit or a non-contractible periodic orbit. This is a contraction.
\end{proof}

 \begin{figure}
  \begin{center}
\includegraphics[scale=0.3]{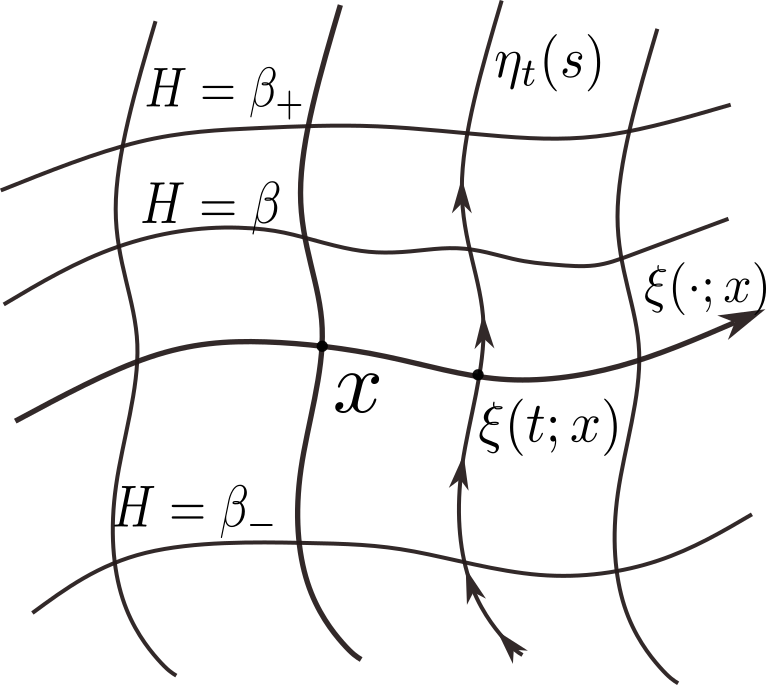}
\includegraphics[scale=0.3]{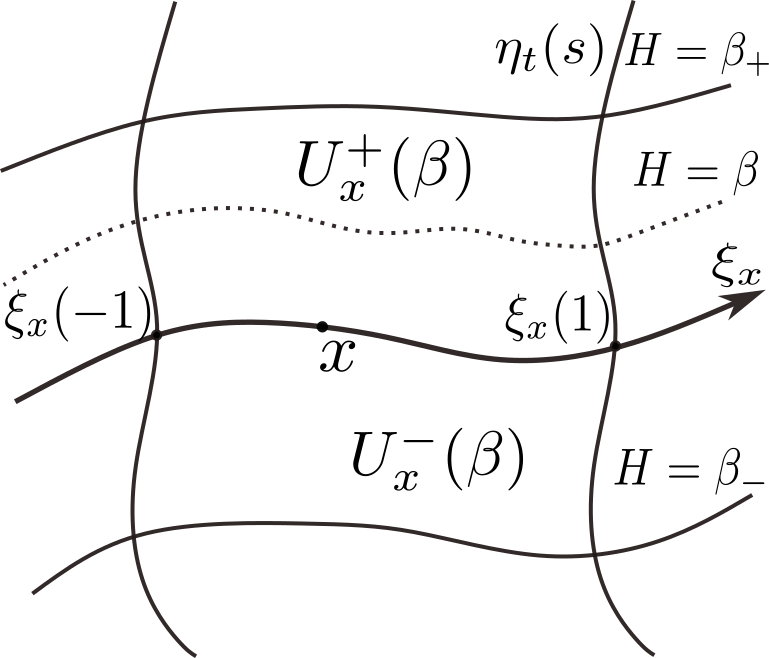}
\end{center}
 \caption{\label{fig:localframe} New coordinate system near $x$.}
\end{figure}
 
Given any $x\notin \Gamma$, we introduce a new coordinate system near $x$ that will be convenient for our purposes. Let $\xi = \xi(\cdot;x): \Rset\to \Rset^2\backslash \Gamma$ be the orbit with $\xi(0)=x$.  For $t\in \Rset$, let $\eta_t(s):\Rset\to \Rset^2$ be the solution to 
 $$
 \begin{cases}
\frac{d}{ds} \eta_t(s) = DH(\eta_t(s)) \quad \text{for $s\in \Rset$}\\[3mm]
 \eta_t(0)=\xi(t).
 \end{cases}
 $$
See Fig.\,\ref{fig:localframe}. Clearly, $H(\eta_t(s))$ is strictly increasing with respect to $s$ for fixed $t$. Let
 $$
 \Omega_x := \{(t,\beta)|\ \lim_{s\to -\infty}H(\eta_t(s)) < \beta<\lim_{s\to +\infty}H(\eta_t(s)).
 $$
 and let $\wj{s_\beta}\in \Rset$ be the unique number such that
$$
 H(\eta_t(\wj{s_\beta}))=\beta.
 $$
 Define the map $\Phi(t,\beta):\Omega_x\to \Rset^2$ as
 \be\label{eq:orbitfunction}
 \Phi(t,\beta)=\eta_t(\wj{s_\beta})
 \ee
 Note that $\Phi$ is a local homeomorphism to its image.  
 Define:
 $$
 \beta_+=\min\{H(\eta_t(1))|\ t\in [-1,1]\} \quad \mathrm{and} \quad  \beta_-=\max\{H(\eta_t(-1))|\ t\in [-1,1]\}.
 $$
  For all $t\in [-1,1]$,  $H(\xi(t;x))=H(x)$ and
 $$
 H(\eta_t(-1))\leq \beta_{-}< H(x)< \beta_+\leq H(\eta_t(1))
  $$
  
Here for clarity of notations, we omit the dependence of  $\beta_+$, $\beta_-$  and $\Phi$ on $x$.  Write three open sets
\begin{equation}
\label{eq:2equations}
 \begin{aligned}
 U_x &= \Phi((-1,1)\times (\beta_-,\beta_+)),\\
U_x^{-}(\beta) &= \Phi((-1,1)\times (\beta_-,\ \beta)),\\
 U_x^{+}(\beta) &= \Phi((-1,1)\times (\beta,\ \beta_+)).
\end{aligned}
\end{equation}

If $\xi:[0,T]\to \Rset^2$ is a closed orbit subject to $\xi(0)=\xi(T)$,  we write $R_\xi$ as the closed region bounded by $\xi$. Clearly, for two closed orbits $\xi_1$ and $\xi_2$ that have different images, 
 \be\label{eq:notinclude}
 R_{\xi_1}\cap R_{\xi_2}\not=\emptyset  \quad \Rightarrow \quad   \text{$R_{\xi_1}\subset R_{\xi_2}^\circ$ or $R_{\xi_2} \subset R_{\xi_1}^\circ$}.
 \ee

 \begin{lem} \label{lem:twoparts} Given $x\notin \Gamma$ and $y\in U_x$, let $\xi_y$ denote the orbit $\xi(\cdot;y)$ with $\xi_y(0)=y$,  and assume further that $\xi_y$ is a closed orbit. Then one and only one of the following holds (See Figure \ref{fig:U_x}).

\begin{itemize}
\item[(1)] $U_x^{+}(H(y))\subset R_{\xi_y}$ and $U_x^{-}(H(y))\cap R_{\xi_y}=\emptyset$;

\item[(2)] $U_x^{-}(H(y))\subset R_{\xi_y}$ and $U_x^{+}(H(y))\cap R_{\xi_y}=\emptyset$.
\end{itemize}
\end{lem}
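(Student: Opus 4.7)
The plan is to use the fact that, locally in the neighborhood $U_x$, the level set $\{H = H(y)\}$ is a single smooth arc coinciding with a portion of the orbit $\xi_y$, which splits $U_x$ into the two pieces $U_x^+(H(y))$ and $U_x^-(H(y))$; then to invoke the Jordan curve theorem for the closed orbit $\xi_y$ to force one piece to lie inside $R_{\xi_y}$ and the other outside.

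First I would record three basic facts about $U_x$. (i) Since $\Phi(t,\beta)$ satisfies $H(\Phi(t,\beta))=\beta$, any point in $U_x^+(H(y))$ has $H$-value strictly greater than $H(y)$, and any point in $U_x^-(H(y))$ has $H$-value strictly less than $H(y)$; in particular, neither set meets $\xi_y$, on which $H\equiv H(y)$. (ii) The map $\Phi$ is a local homeomorphism onto $U_x$, so $U_x^\pm(H(y))$ are open and connected. (iii) The arc $A_y := \Phi((-1,1)\times\{H(y)\})$ is a connected level set of $H$ in $U_x$ on which $DH\ne 0$; by the implicit function theorem it is an embedded smooth arc that is locally an integral curve of $V = D^\perp H$, hence $A_y \subseteq \xi_y([0,T])$.

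Next I would apply the Jordan curve theorem to the simple closed curve $\xi_y$, which separates $\Rset^2$ into the bounded interior $R_{\xi_y}^\circ$ and the unbounded exterior $\Rset^2\setminus R_{\xi_y}$. Since $U_x^+(H(y))$ is connected and disjoint from $\xi_y$ (by (i)), it lies entirely in one of these two components; the same holds for $U_x^-(H(y))$. It remains to exclude the possibility that both lie on the same side.

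For the separation step, I would pick any $t_0\in(-1,1)$ and consider the transverse segment $s\mapsto \eta_{t_0}(s)$, on which $H$ is strictly increasing. For $s$ slightly less than the value $s_{H(y)}$ defined by $H(\eta_{t_0}(s_{H(y)}))=H(y)$, the point $\eta_{t_0}(s)$ lies in $U_x^-(H(y))$; for $s$ slightly greater, it lies in $U_x^+(H(y))$; and at $s=s_{H(y)}$ the segment meets $\xi_y$ transversally (since $\dot\eta_{t_0}=DH\ne 0$ is perpendicular to $V$, which is tangent to $\xi_y$). A transversal crossing of a Jordan curve switches the side, so the two nearby points lie in opposite components of $\Rset^2\setminus\xi_y$. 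Combined with the previous paragraph, this forces one of $U_x^\pm(H(y))$ to lie in $R_{\xi_y}^\circ\subset R_{\xi_y}$ and the other in $\Rset^2\setminus R_{\xi_y}$, yielding exactly the two alternatives (1) and (2).

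The step I expect to be the mildly delicate one is the transversality/side-switching argument: it requires care to verify that $A_y$ really is a piece of $\xi_y$ (and not just contained in the level set $\{H=H(y)\}$, which globally could consist of several orbits), and that $\eta_{t_0}$ crosses $\xi_y$ genuinely transversally so that standard Jordan-curve side-switching applies. Both issues are handled by staying inside $U_x \subset \Rset^2\setminus\Gamma$, where $DH\ne 0$, so that the level curves of $H$ coincide locally with orbits and the gradient flow meets them at a nonzero angle.
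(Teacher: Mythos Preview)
Your proof is correct and shares the paper's overall skeleton: both arguments observe that $U_x^{\pm}(H(y))$ are connected, disjoint from $\xi_y(\Rset)$ (since $H$ takes the wrong value there), and hence each lies entirely in one complementary component of the Jordan curve $\xi_y$. The only substantive difference is in the step ruling out that both pieces lie on the \emph{same} side. You invoke transversality of the gradient curve $\eta_{t_0}$ with $\xi_y$ and the side-switching property of transversal crossings. The paper instead argues by contradiction: if both $U_x^{+}(H(y))$ and $U_x^{-}(H(y))$ were contained in $R_{\xi_y}$, then together with the arc $\Phi((-1,1)\times\{H(y)\})\subset\partial R_{\xi_y}$ they would cover a full neighborhood of $y$, forcing $y\in R_{\xi_y}^{\circ}$ and contradicting $y\in\partial R_{\xi_y}$. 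The paper's route is a bit more economical, needing only elementary point-set topology and avoiding the mildly delicate transversality/side-switching justification you flagged; your route is perfectly valid but carries slightly more overhead for the same conclusion.
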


\begin{proof}  For simplicity of notations, we write $\beta=H(y)$ and $W^\pm=U_x^{\pm}(\beta)$.  Since $H(z)\not=\beta$ for all $z\in W^{\pm}$,   $W^{\pm}\cap \xi_y(\Rset)=\emptyset$. Apparently, $\Phi([-1,1]\times \{\beta\})\subset \xi_y(\Rset)=\partial R_{\xi_y}$ and 
$$
(W^{+}\cup W^{-})\cap R_{\xi_y}\not=\emptyset.
$$
There are two cases: If $W^{+}\cap R_{\xi_y}\not=\emptyset$, since $W^{+}\cap \xi_y(\Rset)=\emptyset$ and $W^{+}$ is connected, we must have $W^{+}\subset R_{\xi_y}$. Similarly, if $W^{-}\cap R_{\xi_y}\not=\emptyset$, we must have $W^{-}\subset R_{\xi_y}$.
\medskip

Finally we observe that, if the two cases happen at the same time, we get $(W^{+}\cup W^{-})\subset R_{\xi_y}$, and hence a neighborhood of $y$ will be contained in $R_{\xi_y}$, which would contradict the fact that $y\in \partial R_{\xi_y}$. Therefore, either Case 1 and hence (1) hold, or Case 2 and hence (2) hold; see Fig.\,\ref{fig:U_x}.
\end{proof}

\begin{figure}
\begin{center}
\includegraphics[width=0.4\textwidth]{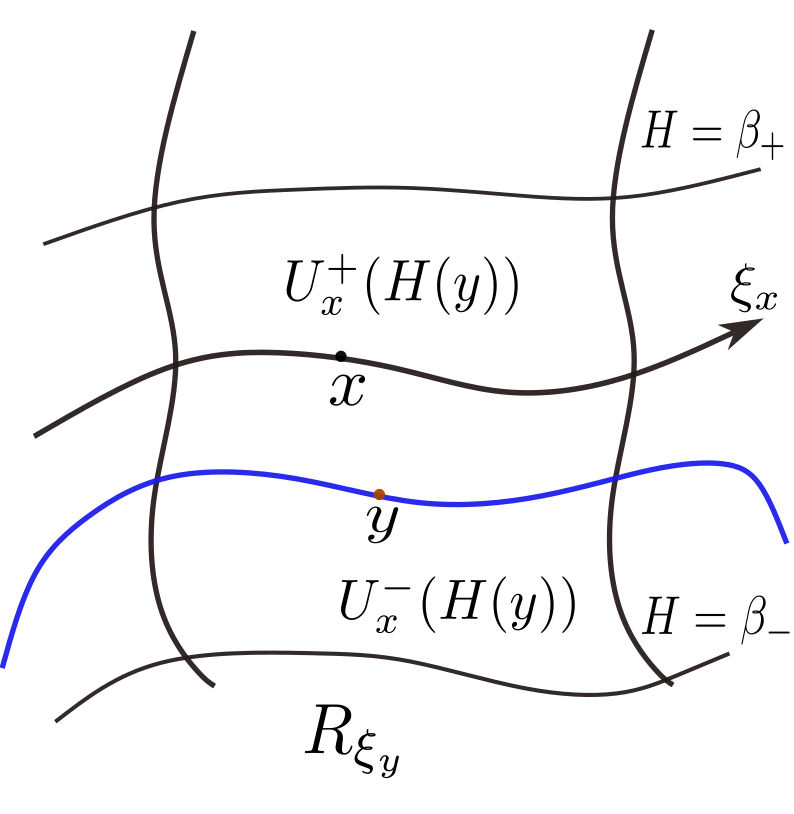}
\includegraphics[width=0.4\textwidth]{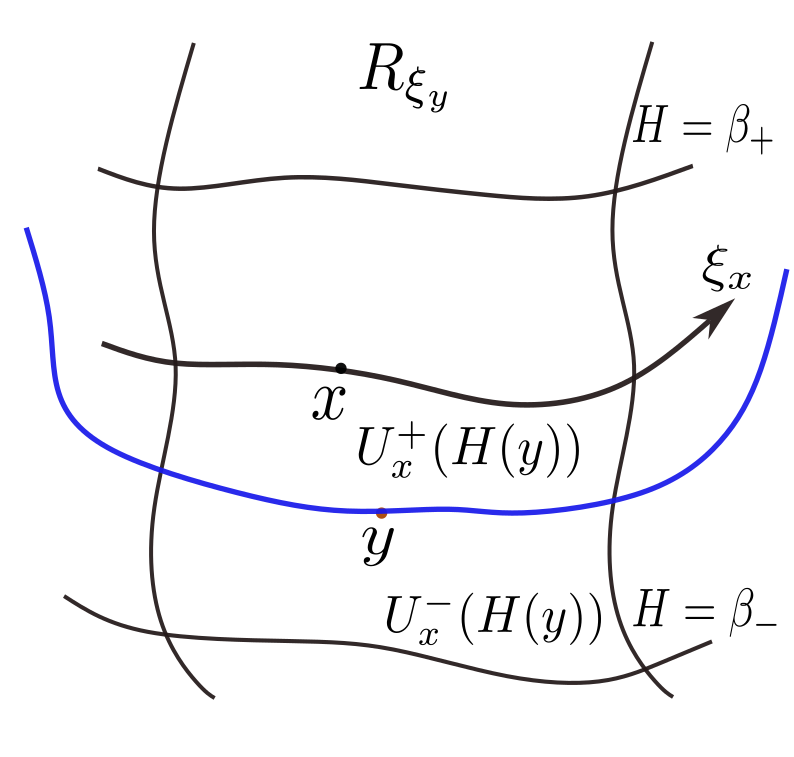}
\end{center}
\caption{\label{fig:U_x} Possible relations between $R_{\xi_y}$ (region enclosed by the orbit $\xi_y$ which is partially shown in blue color) and $U^\pm_x(H(y))$.}
\end{figure}

Moreover, we have that the neighborhood of a closed  orbit/non-contractible periodic orbit is foliated by closed orbits/non-contractible periodic orbit. Specifically speaking, let $\xi:  [0,T]\to \Rset^2$ be a closed orbit with $\xi(0)=x\notin \Gamma$ and $T>0$ be its minimum period. Let
$$
\alpha_+=\min\{H(\eta_t(1))|\ t\in [0,T]\} \quad \mathrm{and} \quad  \alpha_-=\max\{H(\eta_t(-1))|\ t\in [0,T]\}.
$$
Then the following results hold (and similar conclusions hold for non-contractible periodic orbits). 
 \begin{lem}\label{lem:foliation} The map   $\Phi(t,\beta)$  defined in (\ref{eq:orbitfunction})  associated with $\xi$ is a homeomorphism from $[0,T)\times [\alpha_-, \alpha_+]$ to its image.  In particular, for fixed $\beta\in [\alpha_-, \alpha_+]$, $\Phi(t,\beta):[0,T]\to \Rset^2$ is a closed curve of $\{H=\beta\}$. 
 \end{lem}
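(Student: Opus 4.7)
The plan is to verify continuity of $\Phi$, show that each slice $\Phi(\cdot,\beta)$ is a continuous loop in $\{H=\beta\}$, and establish injectivity on the quotient $S^{1}\times[\alpha_-,\alpha_+]$ (with $S^{1}=[0,T]/(0\sim T)$) by exploiting uniqueness of integral curves of the nowhere-vanishing gradient $DH$ on a tubular neighborhood of $\xi$.

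I would begin by recording continuity. Because $V=D^{\perp}H$ is Lipschitz, the flows $\xi$ and $\eta_t(s)$ depend continuously on their arguments by classical ODE theory, and the compact orbit $\xi([0,T])$ lies in $\Rset^{2}\setminus\Gamma$ (uniqueness for $\dot\xi=V(\xi)$ forbids an orbit from touching a stagnation point). Shrinking $\alpha_\pm$ if necessary, I would arrange that the compact closure $K$ of $\Phi([0,T]\times[\alpha_-,\alpha_+])$ lies in a set on which $|DH|$ is uniformly bounded below. Then $s\mapsto H(\eta_t(s))$ has uniformly positive derivative on $K$, and the implicit function theorem yields a continuous $s_\beta(t)$ with $H(\eta_t(s_\beta(t)))=\beta$, making $\Phi(t,\beta)=\eta_t(s_\beta(t))$ jointly continuous. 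Moreover, $\xi(0)=\xi(T)$ together with ODE uniqueness force $\eta_0\equiv\eta_T$, so $\Phi(0,\beta)=\Phi(T,\beta)$, and the induced map $\tilde\Phi:S^{1}\times[\alpha_-,\alpha_+]\to\Rset^{2}$ is continuous with $H\circ\tilde\Phi(\theta,\beta)=\beta$.

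The heart of the proof is injectivity of $\tilde\Phi$. Shrinking $\alpha_\pm$ further, I would arrange that $K$ is a genuine tubular neighborhood of $\xi$ in the following sense: the connected components $C_\beta$ of $\{H=\beta\}\cap K$ meeting $\xi$ form a foliation of $K$ by pairwise disjoint smooth simple closed curves (by the implicit function theorem, valid since $DH\neq 0$ on $K$), and each gradient trajectory $\eta_\theta$ intersects $\xi=C_{H(x)}$ only at $\xi(\theta)$. Suppose $\tilde\Phi(\theta_1,\beta_1)=\tilde\Phi(\theta_2,\beta_2)=y$. Then $\beta_1=\beta_2=H(y)$, and the point $y$ lies on both gradient trajectories $\eta_{\theta_1}$ and $\eta_{\theta_2}$; uniqueness of integral curves of the Lipschitz field $DH$ through $y$ forces these two trajectories to have the same image in $K$, so in particular $\xi(\theta_2)\in\eta_{\theta_1}$. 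The tubular-neighborhood property then gives $\xi(\theta_2)=\xi(\theta_1)$, and minimality of the period $T$ of $\xi$ gives $\theta_1=\theta_2$ in $S^{1}$. Hence $\tilde\Phi$ is injective, and each slice $\tilde\Phi(\cdot,\beta)$ parametrizes the closed curve $C_\beta$ bijectively.

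To conclude, $\tilde\Phi$ is a continuous bijection from the compact Hausdorff space $S^{1}\times[\alpha_-,\alpha_+]$ onto a Hausdorff target, hence a homeomorphism onto its image; this is the claim of the lemma. The parallel result for non-contractible periodic orbits requires only lifting to the universal cover $\Rset^{2}$ and using $\xi(T)=\xi(0)+k$ for some $k\in\Zset^{2}$ in place of $\xi(0)=\xi(T)$. The main technical point I expect to handle carefully is the tubular-neighborhood claim: one must shrink $\alpha_\pm$ so that transverse gradient trajectories do not wrap back to hit $\xi$ a second time within $K$, which follows from compactness of $\xi([0,T])$, the uniform positive lower bound on $|DH|$ near $\xi$, and a standard flow-box / straightening argument applied to the pair of transverse, nowhere-vanishing vector fields $V$ and $DH$.
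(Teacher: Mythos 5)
The paper itself states this lemma without proof (it is treated as a standard foliation fact), so there is no argument of the authors to compare against; judged on its own, your proof is essentially correct and follows the natural route, the same one the paper uses elsewhere (e.g.\ in Lemma \ref{lem:bound}): parametrize the neighborhood by the gradient trajectories $\eta_t$ transverse to $\xi$, use strict monotonicity of $s\mapsto H(\eta_t(s))$, uniqueness of integral curves of the Lipschitz field $DH$, and compactness of the cylinder to upgrade a continuous injection to a homeomorphism onto its image. Your passage to the quotient $S^1\times[\alpha_-,\alpha_+]$ is in fact the correct reading of the statement: on the half-open rectangle $[0,T)\times[\alpha_-,\alpha_+]$ the map is a continuous bijection whose inverse fails continuity along the seam, so the cylinder formulation is what the lemma really asserts.

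The one point to repair is your repeated ``shrinking $\alpha_\pm$ if necessary'': the lemma is stated for the specific $\alpha_\pm$ built from $\eta_t(\pm1)$, and a proof that only works on a thinner band proves a weaker statement. Fortunately the shrinking is not needed for either purpose you invoke it for. First, $\Phi([0,T]\times[\alpha_-,\alpha_+])$ is contained in the compact set $\mathcal{C}=\{\eta_t(s):t\in[0,T],\,|s|\le 1\}$, and $\mathcal{C}\cap\Gamma=\emptyset$ because a gradient trajectory starting at $\xi(t)\notin\Gamma$ cannot reach the equilibrium set $\Gamma$ in finite time (Lipschitz uniqueness); hence $|DH|$ is automatically bounded below there, which is all your implicit-function/continuity step requires. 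Second, the ``tubular neighborhood'' property that $\eta_\theta$ meets $\xi$ only at $\xi(\theta)$ needs no arrangement at all: $\xi(\Rset)\subset\{H=H(x)\}$ while $H(\eta_\theta(s))\neq H(x)$ for $s\neq 0$ by strict monotonicity, so in your injectivity argument the identity of the two trajectory images plus monotonicity of $H$ along them already forces $\xi(\theta_1)=\xi(\theta_2)$, and minimality of $T$ finishes as you say. With the shrinking removed (and ``smooth simple closed curves'' weakened to $C^1$, since $H$ is only $C^{1,1}$), your argument proves the lemma exactly as stated, including the closed-curve claim for each fixed $\beta$ via $\eta_0\equiv\eta_T$.
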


Next, we introduce the definition of cells that play a key role in describing the streamline structure of the flow $V$; see Fig.,\ref{fig:Cell} for an illustration.
  
\begin{defin}\label{def:cell}   A closed set $S$ is called a cell if there exist a sequence of  closed orbits $\{\xi_m\}_{m\geq 1}$ such that $\{R_{\xi_{m}}\}_{m\geq 1}$ is a strictly increasing sequence and 
 \begin{equation}
 \label{eq:def:cell}
 S=\overline{\cup_{m\geq 1}R_{\xi_{m}}} \quad \text{and} \quad \partial S\cap \Gamma\not=\emptyset.
 \end{equation}
 Here $\Gamma=\{x\in \Rset^2|\ V(x)=0\}.$   Also a cell $S$ is called maximal if there does not exist another cell $S'$ such that $S$ is a proper subset of $S'$. 
 \end{defin}
 
We would like to point out that the topology of $\partial S$ near $\Gamma$ could be complicated since $V$ might have degenerate critical points.   
 \begin{figure}
  \begin{center}
\includegraphics[width=0.36\textwidth]{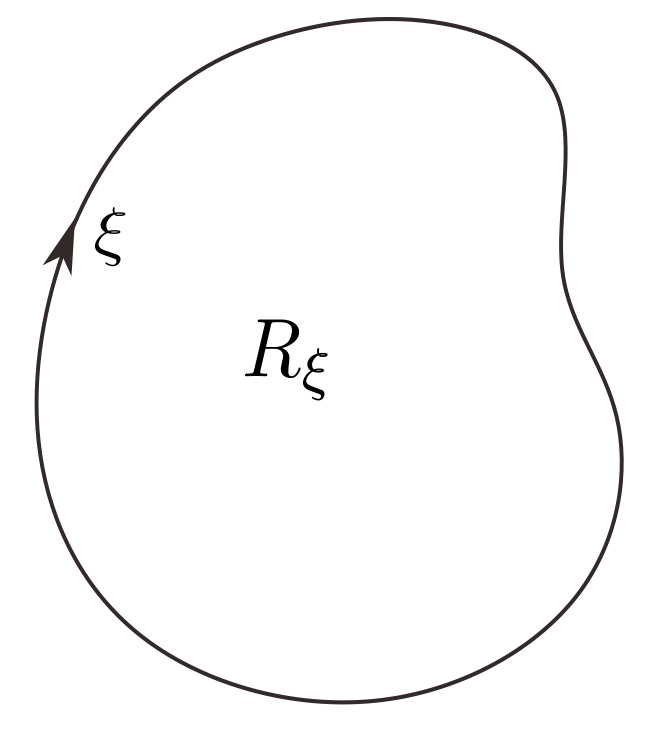} 
\ \ \ 
\includegraphics[width=0.36\textwidth]{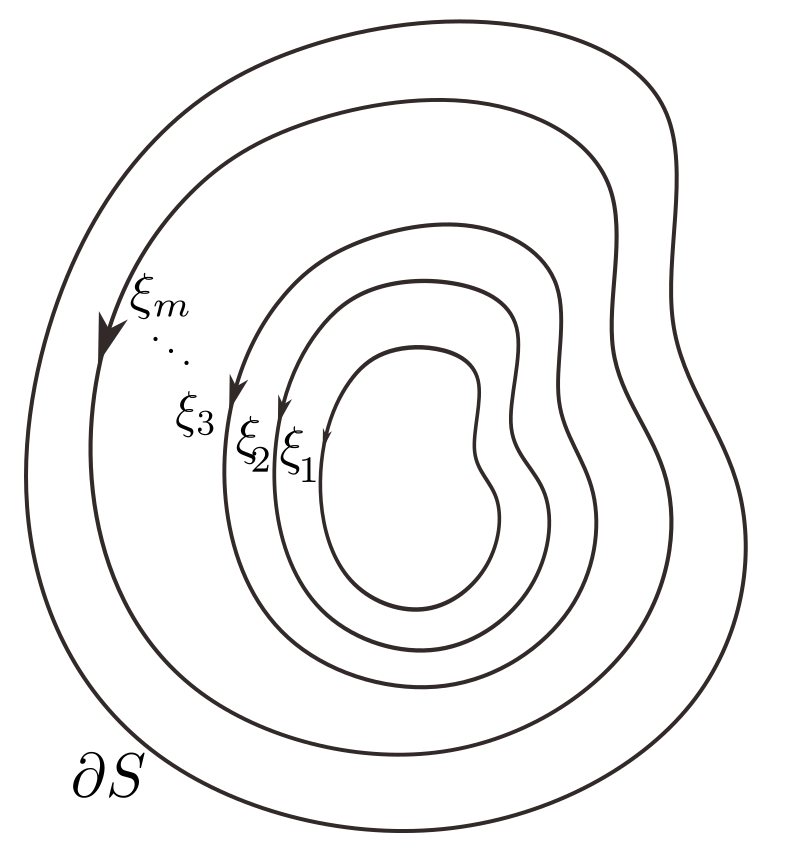}
\end{center}
\caption{\label{fig:Cell} Picture of $R_\xi$ and a cell $S$}
\end{figure}

Below are several basic topological properties of a cell. 
 
\begin{lem}\label{lem:property} Let $S$ be a cell and $\{\xi_m\}_{m\geq 1}$ be a sequence satisfying \eqref{eq:def:cell}.   Then the following results hold.
 \begin{itemize}
 \item[(1)]  $S$ is bounded, closed, connected and $S=\overline{S^{\circ}}$; moreover, $S$, $S^{\circ}$ and $\partial S$ are all flow invariant.
 
 \item[(2)] Let $\mathcal{L}_{\{\xi_m\}_{m\geq1}}=\{x\in \Rset^2|\ x=\lim_{m\to +\infty}x_m \quad \text{for some $x_m\in \xi_m(\Rset)$}\}$. Then
  \be\label{eq:characterization}
  S^{\circ}\backslash \Gamma=(\cup_{m\geq 1}R_{\xi_{m}})\backslash \Gamma \quad \mathrm{and} \quad \partial S\backslash \Gamma=\mathcal{L}_{\{\xi_m\}_{m\geq1}}\backslash{\Gamma}
  \ee
   and $\partial S\subset \mathcal{L}_{\{\xi_m\}_{m\geq1}}$. Moreover, as a consequence, there is a constant $c \in \Rset$ such that
  \be\label{eq:constantbd}
  H(x)\equiv c \quad  \text{on $\partial S$}.
 \ee

  \item[(3)]  For every $x\in \partial S\backslash \Gamma$, the orbit $\xi_x = \xi(\cdot;x)$ is asymptotic to $\Gamma$ and  $\xi_x(\Rset)\subset \partial S$.

  \item[(4)] For any closed orbit $\xi$, if $R_\xi\cap S\not=\emptyset$, then either $R_\xi\subset S^{\circ}$ or $S\subset R_\xi^{\circ}$.
 
 \end{itemize}
 \end{lem}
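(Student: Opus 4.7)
The plan is to establish (1), (2), (3), (4) in that order, with \eqref{eq:notinclude} serving as the workhorse. The first observation is that the strict nesting $R_{\xi_m}\subsetneq R_{\xi_{m+1}}$ in the definition of a cell upgrades via \eqref{eq:notinclude} to $R_{\xi_m}\subset R_{\xi_{m+1}}^{\circ}$, so that $U:=\cup_{m\geq 1}R_{\xi_m}$ equals the open set $\cup_{m\geq 1}R_{\xi_m}^{\circ}$, and in particular $U\subset S^{\circ}$. Item (1) then follows: boundedness of $S$ is inherited from the uniform swirl bound \eqref{eq:boundedswirl} applied to each $\xi_m$; closedness is built into the definition; connectedness comes from $U$ being a nested union of connected sets $R_{\xi_m}$, together with closure preserving connectedness; and $S=\overline{S^{\circ}}$ follows from $U\subset S^{\circ}\subset S=\overline{U}$. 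Flow-invariance of $S$ descends from flow-invariance of each $R_{\xi_m}$, and invariance of $S^{\circ}$ and $\partial S$ is then automatic because the flow is a homeomorphism of $\Rset^2$.

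For (2), the inclusion $U\setminus\Gamma\subset S^{\circ}\setminus\Gamma$ is immediate. For the reverse, given $x\in S^{\circ}\setminus\Gamma$, I classify $\xi_x$ using Lemma \ref{lem:threecases}: non-contractible orbits are impossible because $\xi_x\subset S^{\circ}$ is bounded; if $\xi_x$ is closed, comparing $R_{\xi_x}$ with the $R_{\xi_m}$ via \eqref{eq:notinclude} shows that the alternative $R_{\xi_m}\subset R_{\xi_x}^{\circ}$ for all $m$ forces $S\subset R_{\xi_x}$ and hence the contradiction $x\in S^{\circ}\cap\partial R_{\xi_x}=\emptyset$, so instead $R_{\xi_x}\subset R_{\xi_m}^{\circ}$ for some $m$ and $x\in U$; for an asymptotic $\xi_x$, the flow-box $U_x$ from \eqref{eq:2equations} combined with density of $U$ in $S$ produces a closed orbit $\xi_{m_0}$ meeting $U_x$, and applying Lemma \ref{lem:twoparts} to $\xi_{m_0}$ places $x$ on the side of $\xi_{m_0}$ that lies in $R_{\xi_{m_0}}$, giving $x\in R_{\xi_{m_0}}$. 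For the boundary characterization, $\partial S\subset\mathcal{L}_{\{\xi_m\}}$ follows from a ball-crossing argument: for $x\in\partial S$ and small $\epsilon>0$, pick $y\in B_\epsilon(x)\cap U$ and $w\in B_\epsilon(x)\setminus S$; for $m$ large enough that $y\in R_{\xi_m}$, the segment from $y$ to $w$ must cross $\xi_m$ inside $B_\epsilon(x)$, producing a point of $\xi_m$ within $\epsilon$ of $x$. The identity $H\equiv c$ on $\partial S$ then follows from continuity of $H$, its constancy $H\equiv c_m$ on $\xi_m$, and the convergence of $c_m$ to a single limit, which can be read off from the monotone nesting and the flow-box picture near $\partial S$.

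For (3), flow-invariance of $\partial S$ from (1) gives $\xi_x(\Rset)\subset\partial S$, and boundedness of $\partial S$ excludes the non-contractible case of Lemma \ref{lem:threecases}. If $\xi_x$ were closed, \eqref{eq:notinclude} applied to $\xi_x$ and each $\xi_m$ yields either $R_{\xi_x}\subset R_{\xi_m}^{\circ}\subset S^{\circ}$, contradicting $\xi_x\subset\partial S$, or $R_{\xi_m}\subset R_{\xi_x}^{\circ}$ for all $m$, which forces $S\subset R_{\xi_x}$; in the latter case, choosing the side of $\xi_x$ on which $R_{\xi_x}^{\circ}$ lies locally and invoking density of $U$ in $S$ via the flow-box \eqref{eq:2equations} produces a contradiction with $x\in\partial S$. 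For (4), given $R_\xi\cap S\neq\emptyset$, the first step is to show $R_\xi\cap U\neq\emptyset$: otherwise $R_\xi\cap S\subset\partial S$, and any $y$ in this intersection lies either in $R_\xi^{\circ}$ (where density of $U$ in $S$ gives a point of $U$ inside $R_\xi^{\circ}$, contradicting $R_\xi\cap U=\emptyset$) or in $\xi(\Rset)=\partial R_\xi$ (where (3) would force the closed orbit $\xi$ to be asymptotic to $\Gamma$, a contradiction). Once $R_\xi\cap R_{\xi_m}\neq\emptyset$, \eqref{eq:notinclude} gives the dichotomy $R_\xi\subset R_{\xi_m}^{\circ}\subset S^{\circ}$, or $R_{\xi_m}\subset R_\xi^{\circ}$ for all sufficiently large $m$, whence $S=\overline{U}\subset R_\xi$; the upgrade to $S\subset R_\xi^{\circ}$ then follows from (3), since $\partial S\cap\xi(\Rset)\neq\emptyset$ would make the closed orbit $\xi$ asymptotic.

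The main obstacle is the absence of non-degeneracy of critical points: cells can have rich topology (internal separatrix webs, nested sub-cells, infinitely many scales), so standard Morse--Smale pictures are unavailable. The technical core is therefore the interplay between the local flow-box analysis of Lemma \ref{lem:twoparts} and the global dichotomy \eqref{eq:notinclude}, which together let one rule out pathological placements of $\xi_x$ relative to the exhausting family $\{R_{\xi_m}\}$ on a point-by-point basis.
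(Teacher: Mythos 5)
Your item (1) matches the paper, your segment--crossing argument for $\partial S\subset\mathcal{L}_{\{\xi_m\}_{m\geq 1}}$ is fine, and your item (4) is essentially correct given (2) and (3). But two steps do not hold as written. In (2), asymptotic case, you produce a single closed orbit $\xi_{m_0}$ meeting the flow box $U_x$ and claim that Lemma \ref{lem:twoparts} ``places $x$ on the side of $\xi_{m_0}$ that lies in $R_{\xi_{m_0}}$.'' The lemma says nothing of the sort: it only asserts that one of $U_x^{\pm}(H(y))$ is contained in $R_{\xi_{m_0}}$ and the other is disjoint from it, and $x$ may perfectly well sit on the disjoint side (for instance when $c_{m_0}:=H\rvert_{\xi_{m_0}}>H(x)$, so $x\in U_x^{-}(c_{m_0})$, while case (1) of Lemma \ref{lem:twoparts} holds). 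A repair needs more input: either use $x\in S^{\circ}$ to find points of $\cup_m R_{\xi_m}$ on \emph{both} sides of $\xi_x$ inside the box and play the two applications of Lemma \ref{lem:twoparts} against the nesting \eqref{eq:notinclude}, or follow the paper's route, which proves $S=(\cup_m R_{\xi_m})\cup\mathcal{L}_{\{\xi_m\}}$, passes to a subsequence with all $c_m$ on one side of $c$, and shows that the corresponding one-sided box $U_x^{\pm}(c)$ misses every $R_{\xi_m}$, so every point of $\mathcal{L}_{\{\xi_m\}}\setminus\Gamma$ outside $\cup_m R_{\xi_m}$ lies on $\partial S$.

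The more serious gap is in (3). In the case $R_{\xi_m}\subset R_{\xi_x}^{\circ}$ for all $m$, hence $S\subset R_{\xi_x}$, you claim ``a contradiction with $x\in\partial S$.'' There is no such contradiction: $S=R_{\xi_x}$ with $x\in\xi_x(\Rset)=\partial S$ is entirely consistent with everything used up to that point. The contradiction must come from the cell requirement $\partial S\cap\Gamma\neq\emptyset$ in Definition \ref{def:cell}, and to reach it one has to show that in this case $S=R_{\xi_x}$, so that $\partial S=\xi_x(\Rset)$ contains no stagnation points. This is precisely where the paper invokes the foliation Lemma \ref{lem:foliation}: since $\partial S\subset\mathcal{L}_{\{\xi_m\}}$, closed orbits $\xi_m$ pass arbitrarily close to $x$, so for large $m$ they are leaves of the collar around $\xi_x(\Rset)$; they cannot enclose $\xi_x$ (that would put $x\in R_{\xi_m}^\circ\subset S^{\circ}$), so they are enclosed by it, and as $c_m\to c$ these leaves sweep out $R_{\xi_x}$, forcing $S=R_{\xi_x}$. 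Your sketch never brings in Lemma \ref{lem:foliation}; relatedly, your dichotomy silently ignores the a priori possible alternative $R_{\xi_x}\cap R_{\xi_m}=\emptyset$, which is excluded by the same collar argument (in (2) you can exclude it cheaply via density and $x\in S^\circ$, but not here where $x\in\partial S$). Without these ingredients (3) is not proved, and your (4) leans on (3). A small additional remark: the $c_m$ need not be monotone; their convergence follows simply from $c_m=H(x_m)\to H(x)$ at any fixed point $x\in\mathcal{L}_{\{\xi_m\}}$, as in the paper.
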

 
 \begin{proof}  Throughout the proof, $\xi_x:\Rset\to \Rset^2$ represents the orbit of $V$ satisfying $\xi_x(0)=x$.
 
 \smallskip

 \emph{Proof of (1)}. It is obvious that $S$ is bounded, closed and connected. To see that $S^\circ$ is dense in $S$, we notice that the sequence $\{R_{\xi_m}\}_{m\geq 1}$ is strictly increasing and hence $R_{\xi_m} \subset R_{\xi_{m+1}}^\circ$, so the union $\cup_{m\geq 1}R_{\xi_{m}}$ is an open set, and hence
 $$
 \cup_{m\geq 1}R_{\xi_{m}}^\circ = \cup_{m\geq 1}R_{\xi_{m}}\subset S^{\circ}.
 $$
This shows $S=\overline{S^{\circ}}$.  We would like to point out that $S^{\circ}$ might  be larger than  $\cup_{m\geq 1}R_{\xi_{m}}$ due to the possible degeneracy of critical points.  It is clear that each $R_{\xi_m}$ is flow invariant. Hence $S$ is flow invariant. Since the \emph{flow determined by $V$} is a global homeomorphism, i.e., for fixed $t$, $x\mapsto \xi_x(t)$ is a homeomorphism of $\Rset^2$,  we deduce that $S^{\circ}$ and $\partial S$ are also flow invariant. 
 
 \medskip

\emph{Proof of (2)}. We first establish two simple facts. 
 
 \smallskip
 
\emph{Claim 1}: $x\in \mathcal{L}_{\{\xi_m\}_{m\geq1}}$ if and only if there exists a subsequence $\{\xi_{m_k}\}_{k\geq 1}$ of $\{\xi_m\}_{m\ge 1}$ such that we can find $x_{m_k}\in \xi_{m_k}(\Rset)$ and $\lim_{k\to +\infty}x_{m_k}=x$ .  
 \medskip
 
The \emph{only if} part is obvious. To prove the \emph{if} part, for $k\geq 1$ and $m\in [m_k, m_{k+1}]$,  we can choose 
$$
x_m\in \xi_m(\Rset)\cap \{sx_{m_k}+(1-s)x_{m+1}|\ s\in [0,1]\}.
$$
Then it is easy to see that $x=\lim_{m\to +\infty}x_m$. 
 
 \medskip
 
 \emph{Claim 2}:
 \be\label{eq:claim}
 S=(\cup_{m\geq 1}{R_{\xi_{m}}})\cup \mathcal{L}_{\{\xi_m\}_{m\geq1}}. 
 \ee
By the definition  of $S$, $S$ contains the set on the right; it suffices to prove the other direction of inclusion. Suppose $x\in S$ but $x\notin \mathcal{L}_{\{\xi_m\}_{m\geq1}}$, then by \emph{Claim 1} there exists $r>0$ and $N \in \Nset$ such that 
 $$
 B_r(x)\cap \left(\cup_{m\geq N}\xi_m(\Rset)\right)=\emptyset. 
 $$
 By definition of $S$, $B_r(x)\cap (\cup_{m\geq 1}R_{\xi_{m}})\not=\emptyset$ and $R_{\xi_m}$ is increasing in $m$, so we can choose $m_1\geq N$ such that 
 $$
 B_r(x)\cap R_{\xi_{m_1}}\not=\emptyset.
 $$
 Since $B_r(x)$ is connected and $B_r(x)\cap \xi_{m_1}(\Rset)=\emptyset$, we must have $B_r(x)\subset R_{\xi_{m_1}}$. This establishes \emph{Claim 2}.  
 
\medskip


As a result, $\partial S\subset \mathcal{L}_{\{\xi_m\}_{m\geq1}}$. For each $m\ge 1$, $H\rvert_{\xi_m(\Rset)}$ is a constant denoted below by $c_m$. For any $x\in \mathcal{L}_{\{\xi_m\}}$,  let $x_m \in \xi_m(\Rset)$ so that $x_m\to x$. We get 
$$
H(x) = \lim_{m\to \infty} H(x_m) = \lim_{m\to \infty} c_m =: c.
$$
The last limit is independent of $x$, so $H \equiv c$ on $\mathcal{L}_{\{\xi_m\}}$ and hence on $\partial S$.
 \medskip

 Now fix $x\in   \mathcal{L}_{\{\xi_m\}_{m\geq1}}\backslash \Gamma$. Clearly, $x\notin \cup_{m\geq 1}R_{\xi_m}$. Without loss of generality,  up to a subsequence if necessary, we may assume that $c_m\leq c$ for all $m\geq 1$. Hence
 $$
 U_x^{+}(c)\cap \xi_{m}(\Rset)=\emptyset, \quad \forall m\ge 1.
 $$
See (\ref{eq:2equations}) for the definition of $U_x^{+}(c)$. Since $U_x^{+}(c)$ is connected and $x\notin \cup_{m\geq 1}R_{\xi_m}$,  we must have that 
 $$
 U_x^{+}(c)\cap \left(\cup_{m\geq 1}R_{\xi_m}\right)=\emptyset.
 $$
 Otherwise, we will have $U_x^{+}(c)\subset R_{\xi_m'}^{\circ}$ for some $m'\in \Nset$. This implies that $x\in {\overline U_x^{+}(c)}\subset R_{\xi_m'}$, which contradicts to the choice of $x$.  Hence  $U_x^{+}(c)\cap S=\emptyset$ and $x\in \partial S$.  Thus 
 $$
 \mathcal{L}_{\{\xi_m\}_{m\geq1}}\backslash \Gamma\subset (\partial S)\backslash \Gamma. 
 $$
This establishes one equality in \eqref{eq:characterization}, the other equality follows from \eqref{eq:claim}.  

\medskip

\emph{Proof of (3)}. For $x\in \partial S\backslash \Gamma$, if $\xi_x$ is not asymptotic to $\Gamma$, then by Lemma \ref{lem:threecases}, $\xi_x$ is a closed orbit. Since $\partial S$ is flow invariant, $\partial R_{\xi_x} = \xi_x(\Rset) \subset \partial S$ and is a connected component of $\partial S$. In view of \eqref{eq:characterization} we may find a sequence of closed orbits $\{\xi_m\}_{m\ge 1}$ so that $\{R_{\xi_m}\}$ is strictly increasing, satisfying
$$
S = \overline{\cup_{m\ge 1} R_{\xi_m}}, \qquad \text{and } \quad \lim_{m\to +\infty}\xi_m(0)=x.
$$
Then owing to Lemma \ref{lem:foliation},  $S=R_{\xi_x}$. This is a contradiction to the requirement $\partial S\cap \Gamma\not=\emptyset$ for $S$ being a cell. This shows, for all $x\in \partial S\setminus \Gamma$, $\xi_x$ is asymptotic to $\Gamma$.  

\medskip

\emph{Proof of (4)}.  Assume that $S\cap R_\xi\not=\emptyset$. owing to (3), $\partial S\cap \xi(\Rset)=\emptyset$. Since $\xi(\Rset)$ is connected, there are two cases.
\medskip

Case 1: $\xi(\Rset)\subset \Rset^2\backslash S$.  Since $S$ is connected and $S\cap R_\xi\not=\emptyset$, we must have 
$$
S\subset R_\xi^{\circ}.
$$

\medskip

Case 2: $\xi(\Rset)\subset S^\circ$. Since $\xi(\Rset)\cap \Gamma=\emptyset$, owing to (\ref{eq:characterization}),  there exits $m_0\in \Nset$ such that 
$$
\xi(\Rset)\subset R_{\xi_{m_0}},
$$
which implies that $R_\xi\subset S^{\circ}$.
\end{proof}


\begin{lem}\label{lem:onedirection} Let $S$ be a cell, and $c= H\rvert_{\partial S}$. Then, one and only one of the following holds:

\begin{itemize}
\item[(1)] $\cup_{x\in \partial S\backslash \Gamma}U_x^{+}(c)\subset S^\circ$ and  $\cup_{x\in \partial S\backslash \Gamma}U_x^{-}(c)\subset \Rset^2\backslash S$;
\item[(2)] $\cup_{x\in \partial S\backslash \Gamma}U_x^{-}(c)\subset S^\circ$ and  $\cup_{x\in \partial S\backslash \Gamma}U_x^{+}(c)\subset \Rset^2\backslash S$.
\end{itemize}
\end{lem}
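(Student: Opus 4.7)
My plan is to establish the dichotomy by first proving the pointwise statement that at each $x \in \partial S \setminus \Gamma$ exactly one of $U_x^{\pm}(c)$ lies in $S^\circ$ and the other in $\Rset^2 \setminus S$, and then showing this choice cannot depend on $x$ by using the defining sequence $\{\xi_m\}_{m\ge 1}$ of the cell as a global witness. The starting observation is that $H \equiv c$ on $\partial S$ by Lemma \ref{lem:property}(2), combined with $H(\Phi(t,\beta)) = \beta$ from the definition (\ref{eq:orbitfunction}). Therefore $U_x^{+}(c)$ is exactly the portion of $U_x$ on which $H > c$ and $U_x^{-}(c)$ the portion on which $H < c$; in particular both are disjoint from $\partial S$ and, being images of connected rectangles under the homeomorphism $\Phi$, each sits wholly in $S^\circ$ or wholly in $\Rset^2 \setminus S$.

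\noindent\textbf{Pointwise step.} Fix $x \in \partial S \setminus \Gamma$. One cannot have $U_x^{+}(c)$ and $U_x^{-}(c)$ both in $S^\circ$: the decomposition $U_x = U_x^+(c) \cup U_x^-(c) \cup \Phi((-1,1) \times \{c\})$, together with $\Phi((-1,1) \times \{c\}) = \xi_x((-1,1)) \subset \xi_x(\Rset) \subset \partial S \subset S$ (using Lemma \ref{lem:property}(3)), would place the whole open set $U_x$ inside $S$, contradicting $x \in \partial S$. They cannot both sit in $\Rset^2 \setminus S$ either: Lemma \ref{lem:property}(1) gives $S = \overline{S^\circ}$, so $S^\circ$-points accumulate at $x$ and, being near $x$, must enter $U_x^+(c) \cup U_x^-(c)$.

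\noindent\textbf{Global step and main obstacle.} The subtle part is to show the same alternative holds at every $x$. Let $c_m := H\rvert_{\xi_m(\Rset)}$, so that $c_m \to c$; by (\ref{eq:notinclude}) and the strict monotonicity of $\{R_{\xi_m}\}$, we have $R_{\xi_m} \subset R_{\xi_{m+1}}^\circ \subset S^\circ$. For any $x \in \partial S \setminus \Gamma$, (\ref{eq:characterization}) gives $x \in \mathcal{L}_{\{\xi_m\}_{m\ge 1}}$, hence there exist $x_m \in \xi_m(\Rset) \subset S^\circ$ with $x_m \to x$. By Lemma \ref{lem:property}(3), the orbit $\xi_x$ is asymptotic to $\Gamma$, so it is not closed and must be disjoint from each closed orbit $\xi_m$; consequently, for large $m$, the point $x_m \in U_x$ cannot lie on $\xi_x(\Rset) \cap U_x = \Phi((-1,1) \times \{c\})$, which forces $c_m = H(x_m) \ne c$ eventually. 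For such $m$, $x_m$ lies in whichever of $U_x^{\pm}(c)$ is contained in $S^\circ$, so the eventual sign of $c_m - c$ records the alternative at $x$. The crucial observation — and the place where the argument would collapse without Lemma \ref{lem:property}(3), since an approximating $\xi_m$ could otherwise coincide with a boundary level of $H$ and render the sign ambiguous — is that the sequence $\{c_m\}$ is an intrinsic attribute of the cell and its defining family, independent of the base point. Hence its eventual sign relative to $c$ is the same at every $x \in \partial S \setminus \Gamma$, which pins down a single global alternative, yielding (1) or (2) uniformly.
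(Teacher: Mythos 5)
Your proof is correct, and it reaches the dichotomy by a route that is organized differently from the paper's. The paper first passes to a subsequence of the defining orbits so that either $c_m\ge c$ for all $m$ or $c_m\le c$ for all $m$ (this global sign choice is what selects alternative (1) or (2)), and then, for each fixed $x\in\partial S\setminus\Gamma$, proves the inclusions directly: the connected set $U_x^{+}(c_m)$ misses $\xi_m(\Rset)=\partial R_{\xi_m}$ but meets $R_{\xi_m}$ (via the arc $\Phi([-1,1]\times\{c_1\})$), hence $U_x^{+}(c_m)\subset R_{\xi_m}$, and taking unions gives $U_x^{+}(c)\subset\cup_{m}R_{\xi_m}\subset S^{\circ}$, while a separate contradiction argument shows $U_x^{-}(c)\cap R_{\xi_m}=\emptyset$ for all $m$, whence $U_x^{-}(c)\subset\Rset^2\setminus S$. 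You instead prove a pointwise dichotomy first---each of $U_x^{\pm}(c)$ is connected and avoids $\{H=c\}\supset\partial S$, so lies entirely in $S^{\circ}$ or entirely in $\Rset^2\setminus S$, and it cannot be both (not both inside, since $U_x$ is an open neighborhood of $x\in\partial S$; not both outside, since $S=\overline{S^{\circ}}$)---and then globalize by observing that the approximating points $x_m\in\xi_m(\Rset)\subset S^{\circ}$ eventually lie in $U_x^{+}(c)$ or $U_x^{-}(c)$ according to the sign of $c_m-c$, which is an attribute of the defining sequence alone and hence independent of $x$. Both arguments ultimately hinge on the same selector (the eventual position of $c_m$ relative to $c$) and the same ingredients (connectedness of $U_x^{\pm}$, $H\equiv c$ on $\partial S$, and items (1)--(3) of Lemma \ref{lem:property}); your version avoids the subsequence reduction and the level-by-level inclusions $U_x^{+}(c_m)\subset R_{\xi_m}$, which makes it arguably cleaner, at the cost of obtaining only $U_x^{+}(c)\subset S^{\circ}$ rather than the more explicit $U_x^{+}(c)\subset\cup_m R_{\xi_m}$ (not needed for the statement). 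Two small remarks: connectedness of $U_x^{\pm}(c)$ needs only continuity of $\Phi$ (the paper claims only a local homeomorphism, so do not lean on injectivity), and the fact you actually need in the global step is $\{y\in U_x:\ H(y)=c\}=\Phi((-1,1)\times\{c\})\subset\xi_x(\Rset)$, which follows from $H(\Phi(t,\beta))=\beta$; your parenthetical about where Lemma \ref{lem:property}(3) is indispensable is worded confusingly, but the underlying point---that $c_m\ne c$ for large $m$ because the closed orbits $\xi_m$ are disjoint from $\xi_x$---is sound.
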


\begin{proof}  By Definition \ref{def:cell} there is a strictly increasing sequence of $\{R_{\xi_m}\}$ such that 
$S=\overline{\cup_{m\geq 1}R_{\xi_m}}$. Note that $H\rvert_{\xi_m(\Rset)}$, restricted to each of the closed orbits $\xi_m$, is a constant $c_m$. By choosing a subsequence if necessary, it suffices to look at the following two cases. 

\emph{Case 1: $c_m\geq c$ for $m\geq 1$}. We establish (1).  It suffices to show that for  any fixed $x\in \partial S\backslash \Gamma$, $U^+_x(c) \subset S^\circ$ and $U_x^-(c)\subset \Rset^2\setminus S$.  

In fact, without loss of generality,  we may assume that 
$$
\{\xi_m(0)\}_{m\geq 1}\subset U_x^{+}(c) \quad \mathrm{and} \quad \lim_{m\to +\infty}\xi_m(0)=x.
$$
Upon a subsequence if necessary,  we may also assume that $c_m$ is strictly decreasing.

\smallskip

Let $\Phi$ be the map defined in (\ref{eq:orbitfunction}).  Then for $m\geq 2$,  we have $c_m <c_1$, $\Phi([-1,1]\times \{c_1\})\subset \xi_1(\Rset)\subset R_{\xi_1}$, $R_{\xi_1}\subset R_{\xi_m}$, and hence
$$
\emptyset \neq \Phi([-1,1]\times \{c_1\})\subset (R_{\xi_m}\cap U_x^+(c_m)).
$$
On the other hand, we have $\xi_m(\Rset)\cap U^+_x(c_m) = \emptyset$. Since $U_x^+(c_m)$ is connected, we must have that 
$$
U^+_x(c_m) \subset R_{\xi_m}.
$$
By taking union of all $m\in \Nset$,  we derive that
$$
U_x^{+}(c)=\cup_{m\geq 1}U^+_x(c_m)\subset \cup_{m\geq 1}R_{\xi_m} \subseteq S^\circ.
$$

To show $U_x^-(c)$ is outside $S$, we prove the following claim.
\smallskip

\emph{Claim}: $U_x^{-}(c) \cap R_{\xi_m}=\emptyset$,  for all $m\geq 1$.
\smallskip

Suppose the Claim does not hold, so $U_x^{-}(c) \cap R_{\xi_{m_0}}\not=\emptyset$ for some $m_0\ge 1$. Since $c<c_{m_0}$, $U_x^{-}(c)\cap \xi_{m_0}(\Rset)=\emptyset$. Note again that $\xi_{m_0}(\Rset) = \partial R_{\xi_{m_0}}$. Because $U_x^-(c)$ is connected, we must have 
$$
 U_x^{-}(c)\subset R_{\xi_{m_0}}^{\circ}\subset S^{\circ}. 
$$
Hence there exists $r>0$, such that $B_r(x)\subset S$. This contradicts to $x\in \partial S$.  This proves the claim, and it follows that $U_x^-(c) \subset \Rset^2\setminus S$. Thus (1) holds in Case 1.

\medskip

\emph{Case 2:  $c_m\leq c$ for $m\geq 1$}. By exchanging the roles of $U_x^+$ and $U_x^-$, the same proof above leads to (2).   
\end{proof}

 Also, we have that 
 \begin{cor}\label{cor:twodirections} Suppose that $V$ does not admit any non-contractible periodic orbit. Then, for any $x\in \Rset^2\backslash{\Gamma}$ on an orbit $\xi$ that is asymptotic to $\Gamma$,  there exists a cell $S$ such that $x\in \partial S$. 
 \end{cor}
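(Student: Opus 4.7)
The plan is to use Poincar\'e recurrence to produce closed orbits arbitrarily close to $x$ whose enclosed regions do not contain $x$, extract a strictly increasing nested subsequence of such regions, and take the closure of their union as the desired cell $S$. Write $c=H(x)$. Once $x\in\partial S$ is established, the asymptoticity of $\xi_x=\xi(\cdot;x)$ to $\Gamma$ will deliver $\partial S\cap\Gamma\ne\emptyset$ automatically.

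The first and most delicate step will be producing a sequence $y_n\to x$ on closed orbits $\xi_n$ with $x\notin R_{\xi_n}$. By Poincar\'e recurrence for the measure-preserving time-one map on $\Tset^2$ together with the observation in Section~2 that every recurrent point in $\Rset^2$ is periodic, the no-non-contractible-periodic-orbit hypothesis forces Lebesgue-almost every non-critical point to lie on a closed orbit; in particular, closed-orbit points are dense in every neighbourhood of $x$. To force $x\notin R_{\xi_n}$, I would argue by contradiction: if some ball $B_\epsilon(x)\subset U_x$ satisfied $x\in R_{\xi_y}$ for every closed orbit meeting it, I would pick closed-orbit representatives $y\in B_\epsilon\cap U_x^+(c)$ and $y'\in B_\epsilon\cap U_x^-(c)$. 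Lemma~\ref{lem:twoparts} then forces Case~(2) at $y$, giving $U_x^-(c)\subset U_x^-(H(y))\subset R_{\xi_y}$, and Case~(1) at $y'$, giving $U_x^+(c)\subset U_x^+(H(y'))\subset R_{\xi_{y'}}$. Hence $y\in R_{\xi_{y'}}$ and $y'\in R_{\xi_y}$. Since $H(y)>c>H(y')$, the two regions are distinct, so \eqref{eq:notinclude} puts one strictly inside the interior of the other; either way, one of $y$ or $y'$ ends up in the interior of a region whose boundary it actually lies on, a contradiction.

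With the sequence secured, pass to a subsequence lying in $U_x^+(c)$ (the symmetric case $U_x^-(c)$ is handled identically). Then Case~(1) of Lemma~\ref{lem:twoparts} holds automatically at each $y_n$, for Case~(2) would imply $x\in U_x^-(H(y_n))\subset R_{\xi_n}$. Arrange $H(y_n)\searrow c$ strictly. By \eqref{eq:notinclude} the distinct $R_{\xi_n}$ are totally ordered by inclusion, so I would extract a monotone sub-subsequence. The decreasing alternative is untenable: if $R_{\xi_{n_1}}\supset R_{\xi_{n_k}}\supset U_x^+(H(y_{n_k}))$ for every $k$, then $R_{\xi_{n_1}}\supset\bigcup_k U_x^+(H(y_{n_k}))=U_x^+(c)$, giving $x\in\overline{U_x^+(c)}\subset R_{\xi_{n_1}}$ and contradicting Case~(1) at $y_{n_1}$. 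So, after relabeling, $\{R_{\xi_n}\}_{n\ge 1}$ is strictly increasing.

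Finally, set $S=\overline{\bigcup_n R_{\xi_n}}$ and check the cell axioms. The identity $U_x^+(c)=\bigcup_n U_x^+(H(y_n))\subset\bigcup_n R_{\xi_n}$ gives $x\in\overline{U_x^+(c)}\subset S$; Case~(1) at each $y_n$ gives $U_x^-(c)\cap R_{\xi_n}=\emptyset$, and openness of $U_x^-(c)$ upgrades this to $U_x^-(c)\cap S=\emptyset$, so that $x\in\overline{U_x^-(c)}\cap S\subset\partial S$. Since $\bigcup_n R_{\xi_n}$ is flow-invariant and the time-$t$ flow is a homeomorphism of $\Rset^2$, both $S$ and $\partial S$ are flow-invariant; hence $\xi_x(\Rset)\subset\partial S$, and because $\xi_x$ is asymptotic to $\Gamma$, any $\omega$-limit point of $\xi_x$ lies in $\overline{\xi_x(\Rset)}\cap\Gamma\subset\partial S\cap\Gamma$. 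Thus $S$ is a cell in the sense of Definition~\ref{def:cell} with $x\in\partial S$. The main obstacle throughout is the contradiction argument in Step~1, which leverages the planar nesting \eqref{eq:notinclude} to rule out the awkward ``closed orbits surround $x$ from both sides'' configuration; once that is excluded, everything else is bookkeeping within the $\Phi$-coordinate framework from Section~2.
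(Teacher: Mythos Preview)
Your proof is correct and follows essentially the same approach as the paper's: both use Poincar\'e recurrence to produce closed orbits on each side of $x$, invoke Lemma~\ref{lem:twoparts} together with \eqref{eq:notinclude} to rule out the configuration in which $x$ lies inside all nearby enclosed regions, and then build $S$ as the closure of a strictly increasing union $\cup_m R_{\xi_m}$. The only cosmetic differences are that your Step~1 contradiction is the contrapositive of the paper's Claim $x\notin R_{\xi_m}\cap R_{\tilde\xi_n}$, and your monotonicity step (extract a monotone subsequence, rule out the decreasing case) replaces the paper's direct verification of $R_{\xi_m}\subset R_{\xi_{m+1}}$ via \eqref{eq:separate}.
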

 
 \begin{figure}
\begin{center}\includegraphics[width=.7\textwidth]{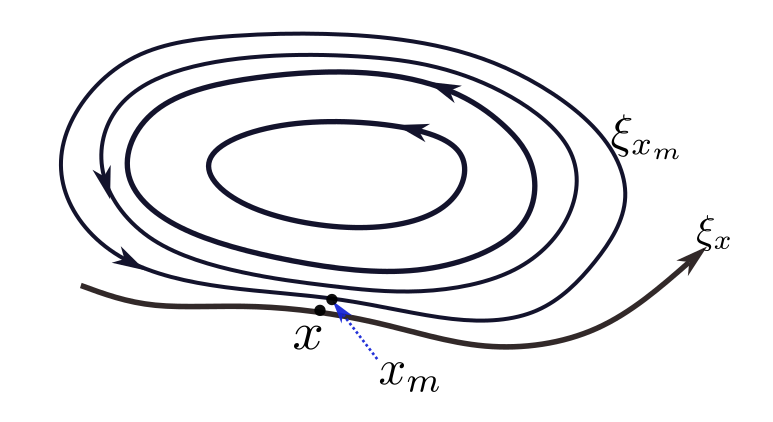}
\end{center}
\caption{\label{fig:asymCell}Construction of a cell close to an orbit that is asymptotic to $\Gamma$}
\end{figure}
 
\begin{proof} Let $c=H(x)$ and $U_x$, $U_x^\pm(\cdot)$ be defined by \eqref{eq:2equations}.  By the Poincar\'e recurrence theorem,  we can choose two sequence of points $\{x_{m}\}_{m\geq 1}\subset U_x^{+}(c)$ and $\{\tilde x_{m}\}_{m\geq 1}\subset U_x^{-}(c)$ such that all of the following hold:
 
 \begin{itemize}
 \item[(i)] $\lim_{m\to +\infty}x_{m}=\lim_{m\to +\infty}\tilde x_{m}=x$; 
 
\item[(ii)] For each $m \ge 1$, the orbits $\xi_m := \xi(\cdot;x_m)$ and $\tilde \xi_m = \xi(\cdot;\tilde x_m)$ are closed orbits with $\xi_m(0)=x_m$ and $\tilde \xi_m(0)=\tilde x_m$.

\item[(iii)] Let $c_m := H(x_m) = H\rvert_{\xi_m}$, and $\tilde c_m := H(x_m) = H\rvert_{\tilde \xi_m}$. Then $\{c_m\}_{m\ge 1}$ is strictly decreasing and the sequence  $\{\tilde c_{m} \}_{m\ge 1}$ is strictly increasing;
 
 \end{itemize}

{\bf Claim:} For all $m,n\in \Nset$,
$$
x\notin R_{\xi_m}\cap R_{\tilde \xi_n}.
$$
If not, suppose that $x\in R_{\xi_m}\cap R_{\tilde \xi_n}$. By (\ref{eq:notinclude}),  without loss of generality, we assume  that $R_{\xi_m}\subset R_{\tilde \xi_n}^\circ$. Since $x\in U_{x}^{-}(c_m)\cap U_{x}^{+}(\tilde c_n)$,   Lemma \ref{lem:twoparts} implies that 
$$
U_{x}^{-}(c_m)\subset R_{\xi_m} \quad \text{and} \quad U_{x}^{+}(\tilde c_n)\subset R_{\tilde \xi_n}.
$$
Accordingly,  $\tilde \xi_n(0)\in U_{x}^{-}(c_m)\subset R_{\tilde \xi_n}^\circ$, which is absurd. Hence the above claim holds. 
  So, upon choosing a subsequence, without loss of generality, we may assume that 
  $$
  x\notin \cup_{m\geq 1} R_{\xi_m}.
  $$
Since $x\in U_x^-(c_m)$ for all $m\geq 1$,  Lemma \ref{lem:twoparts} implies that 
\be\label{eq:separate}
U_{x}^{+}(c_m)\subset R_{\xi_m} \quad \text{and} \quad U_{x}^{-}(c_m)\cap R_{\xi_m}=\emptyset.
\ee
On the one hand, $c_{m+1}<c_m$ implies $\xi_m(0) \in U_x^+(c_{m+1})$, and hence the first relation above implies $R_{\xi_m}\cap R_{\xi_{m+1}}\not=\emptyset$. 
On the other hand, $c_{m+1}<c_m$ also shows $\xi_{m+1}(0)\in U_{x}^{-}(c_m)$, and hence the second relation above implies $\xi_{m+1}(0)\in R_{\xi_{m+1}}\backslash R_{\xi_{m}}$. Thus by \eqref{eq:notinclude} we must have $R_{\xi_m}\subset R_{\xi_{m+1}}$ for all $m\ge 1$, which shows that $\{R_{\xi_m}\}_{m\ge 1}$ is strictly increasing. Let
 $$
 S=\overline {\cup_{m\geq 1}R_{\xi_{m}}}.
 $$
Clearly, $x\in S$. As we have checked before, $S$ and $\partial S$ are flow invariant. Since $ U_x^{-}(c)\subset U_x^{-}(c_m)$ for all $m\geq 1$, (\ref{eq:separate}) implies that
$$
 U_x^{-}(c)\cap \left(\cup_{m\geq 1}R_{\xi_{m}}\right)=\emptyset.
 $$
Therefore, $S\cap U_x^{-}(c)=\emptyset$.  So $x\in \partial S$.  By flow invariance of $\partial S$, we have $\xi_x(\Rset) \subset \partial S$, and since $\xi_x$ is asymptotic to $\Gamma$, we have $\partial S\cap \Gamma \neq \emptyset$.  As a result, $S$ is a cell.
\end{proof}

\begin{cor}\label{cor:con} Let $S_1$ and $S_2$ be two different cells. Then exactly one of the following holds:

 \begin{itemize}
 \item[(1)] $S_1\cap S_2=\partial S_1\cap \partial S_2$; or

 \item[(2)] There exists a closed orbit $\xi$  such that
 $$
 S_1\subset R_{\xi} \subset S_2^{\circ} \quad \text{or} \quad S_2\subset R_{\xi}\subset S_1^{\circ}.
 $$
 
\end{itemize}

\end{cor}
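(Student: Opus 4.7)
The plan is to dichotomize on whether $S_1^\circ\cap S_2^\circ$ is empty or not: the empty case will produce conclusion (1) and the nonempty case will produce (2).

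Suppose first that $S_1^\circ\cap S_2^\circ=\emptyset$. If $x\in S_1^\circ\cap S_2$, then $x$ has an open neighborhood $W\subseteq S_1^\circ$; since $S_2=\overline{S_2^\circ}$ by Lemma~\ref{lem:property}(1), $W$ must meet $S_2^\circ$, contradicting the hypothesis. Hence $S_1^\circ\cap S_2=\emptyset$, and by symmetry $S_1\cap S_2^\circ=\emptyset$; combined with the trivial inclusion $\partial S_1\cap\partial S_2\subseteq S_1\cap S_2$, this gives (1).

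Suppose now $S_1^\circ\cap S_2^\circ\neq\emptyset$ and fix strictly increasing defining sequences $\{\xi_m^{(1)}\}$, $\{\xi_n^{(2)}\}$ for $S_1$, $S_2$. By \eqref{eq:notinclude}, strict nesting forces $R_{\xi_m^{(i)}}\subset R_{\xi_{m+1}^{(i)}}^\circ$, so each $\cup_m R_{\xi_m^{(i)}}$ is open in $\Rset^2$, and by Definition~\ref{def:cell} its closure equals $S_i$; in particular it is open and dense in $S_i$. Applying this density twice to the nonempty open set $U:=S_1^\circ\cap S_2^\circ$—first extracting a point of $U\cap\cup_m R_{\xi_m^{(1)}}$, then, inside that still-open intersection, a point meeting $\cup_n R_{\xi_n^{(2)}}$—produces indices $m_0$, $n_0$ and a point $y\in R_{\xi_{m_0}^{(1)}}\cap R_{\xi_{n_0}^{(2)}}$. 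For every $n\geq n_0$ we then have $R_{\xi_n^{(2)}}\cap S_1\ni y$, so the trichotomy in Lemma~\ref{lem:property}(4) leaves exactly two alternatives: either $R_{\xi_n^{(2)}}\subseteq S_1^\circ$, or $S_1\subseteq R_{\xi_n^{(2)}}^\circ$. If the second holds for some $n_1\geq n_0$, strict nesting yields $R_{\xi_{n_1}^{(2)}}\subset R_{\xi_{n_1+1}^{(2)}}^\circ\subseteq S_2^\circ$, and $\xi:=\xi_{n_1}^{(2)}$ realizes $S_1\subset R_\xi\subset S_2^\circ$, establishing (2); otherwise $R_{\xi_n^{(2)}}\subseteq S_1^\circ$ for all $n\geq n_0$, and passing to the closure gives $S_2\subseteq S_1$.

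Running the symmetric argument with the roles of $S_1$ and $S_2$ exchanged either produces a closed orbit $\xi=\xi_{m_1}^{(1)}$ with $S_2\subset R_\xi\subset S_1^\circ$ (again (2)), or gives $S_1\subseteq S_2$. If neither branch delivers a closed orbit as in (2), the two inclusions combine to $S_1=S_2$, contradicting $S_1\neq S_2$. Mutual exclusivity of (1) and (2) is immediate: under (2), one cell sits in the open interior of the other, so $S_1\cap S_2$ contains a nonempty open set, whereas $\partial S_1\cap\partial S_2\subseteq\partial S_1$ has empty interior in $\Rset^2$ (which follows from $S_1=\overline{S_1^\circ}$). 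The only step requiring genuine care is the simultaneous-density argument producing the point $y$; once $y$ is in hand, the remainder reduces to a clean application of Lemma~\ref{lem:property}(4) together with the strict nesting of the defining sequences.
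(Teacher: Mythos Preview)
Your proof is correct and follows essentially the same approach as the paper's: both reduce to Lemma~\ref{lem:property}(4) applied to the closed orbits in a defining sequence of one cell against the other cell, and both reach a contradiction via $S_1=S_2$ when no suitable $\xi$ appears. Your version is slightly more symmetric (you first locate a common point $y$ via a double-density argument, whereas the paper cascades the two applications of Lemma~\ref{lem:property}(4)), and you additionally verify the mutual exclusivity of (1) and (2), which the paper leaves implicit.
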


\begin{proof}  Assume that (1) does not hold. Without loss of generality, we may assume that
$$
S_1\cap S_2^{\circ}\not=\emptyset. 
$$
 In view of Definition \ref{def:cell}, we can choose two sequences of closed orbits, denoted respectively by $\{\xi_{m,1}\}_{m\ge 1}$ and $\{\xi_{m,2}\}_{m\ge 1}$, so that the corresponding sequence of regions enclosed by them, i.e., $\{R_{\xi_{m,1}}\}_{m\geq 1}$ and $\{R_{\xi_{m,2}}\}_{m\geq 1}$, are strictly increasing and satisfy
 $$
 S_1=\overline {\cup_{m\geq 1}R_{\xi_{m,1}}} \quad \mathrm{and} \quad S_2=\overline {\cup_{m\geq 1}R_{\xi_{m,2}}}.
 $$
 Then there must exist $m_1\in \Nset$ such that for $m\geq m_1$,
 $$
 R_{\xi_{m,1}}\cap S_2\not=\emptyset.
 $$
 Hence owing to (4) in Lemma \ref{lem:property}, there are two cases.

 \medskip

 Case 1:  $S_2\subset R_{\xi_{m',1}}$ for some $m'\geq m_1$. Then our conclusion holds.

 \medskip

 Case 2:  $\cup_{m\geq m_1}R_{\xi_{m,1}}\subset S_2$.  Then
 $$
 S_1\subseteq S_2. 
 $$
So $S_1\cap \left(\cup_{m\geq m_1}R_{\xi_{m,2}}\right)\not=\emptyset$.  Again, thanks to (4) in Lemma \ref{lem:property}, either there exists $m_2$ such that $S_1\subset R_{\xi_{m_2,2}}$ and we get our conclusion, or if otherwise, we must have 
$$
\cup_{m\geq 1}R_{\xi_{m,2}}\subset S_1.
$$
This leads to $S_2\subseteq S_1$ and, therefore,  $S_2=S_1$,  which contradicts to the assumption that $S_1$ and $S_2$ are two different cells.
\end{proof}

\begin{lem}\label{lem:maximum} Suppose that $V$ does not admit any non-contractible periodic orbits. Then the following holds.
\begin{itemize}
\item[(1)] For any $x\in \Rset^2\backslash \Gamma$, there exists a maximal cell $S$ such that $x\in S$.   
\item[(2)] For any maximal cell $S$ and any closed orbit $\xi$ of $V$, then either $R_\xi\subset S^{\circ}$ or $R_\xi\cap S=\emptyset$.  
\end{itemize}
\end{lem}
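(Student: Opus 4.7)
The plan is to prove (1) by first constructing some cell containing $x$ and then promoting it to a maximal one via Zorn's lemma, and to deduce (2) from (1) together with Lemma \ref{lem:property}(4).

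For existence in (1), I split into cases via Lemma \ref{lem:threecases}: under the no-non-contractible-periodic-orbit hypothesis, the orbit $\xi_x$ is either closed or asymptotic to $\Gamma$. The asymptotic case is handled directly by Corollary \ref{cor:twodirections}. For the closed case, I consider the family $\Sigma_x=\{\xi \text{ closed}: x\in R_\xi\}$, which is totally ordered by inclusion (via \eqref{eq:notinclude}, since every member contains $x$) with uniformly bounded diameters (swirl bound \eqref{eq:boundedswirl}). Using the foliation Lemma \ref{lem:foliation}, $\Sigma_x$ has no maximum, since any purported maximal closed orbit would be foliated on its outer side by strictly larger closed orbits still containing $x$, producing a strictly larger member of $\Sigma_x$. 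Thus $U=\cup_{\xi\in \Sigma_x} R_\xi$ is open and hence Lindel\"of; I extract a cofinal strictly increasing sequence $\{R_{\xi_m}\}$ and put $S=\overline{\cup_m R_{\xi_m}}$. The essential check is $\partial S\cap \Gamma\neq \emptyset$: any $y\in \partial S\setminus \Gamma$ has $\xi_y\subset \partial S$ by flow invariance; an asymptotic $\xi_y$ immediately places $\Gamma$ points onto the closed set $\partial S$, whereas a closed $\xi_y$ forces via the foliation that $\xi_m$ sits on one definite side of $\xi_y$ for large $m$, and in both cases one derives a contradiction (either $x\in R_{\xi_y}$ and cofinality gives $R_{\xi_y}\subset R_{\xi_{m'}}\subset R_{\xi_y}^\circ$, or $\xi_m$ surrounds $\xi_y$ so $y\in S^\circ$).

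For maximality, I apply Zorn's lemma to the poset $\mathcal{P}_x$ of cells containing $x$, ordered by inclusion. A chain's union is bounded (by the swirl bound applied to the defining closed orbits of each cell) and hence separable, so I reduce to a countable cofinal subchain $S_1\subset S_2\subset \ldots$ with $S^*:=\overline{\cup_n S_n}$. Writing each $S_n=\overline{\cup_m R_{\xi_{n,m}}}$, a diagonal extraction produces a strictly increasing sequence $\{R_{\tilde\xi_k}\}$ with $\overline{\cup_k R_{\tilde\xi_k}}=S^*$; the key enabling step is that each $R_{\tilde\xi_k}$ is compactly contained in some $S_N^\circ$ (by Lemma \ref{lem:property}(4), the alternative $S_N\subset R_{\tilde\xi_k}^\circ$ would force $S_N=R_{\tilde\xi_k}$, contradicting $\partial S_N\cap \Gamma\neq \emptyset$ against $\partial R_{\tilde\xi_k}\cap \Gamma=\emptyset$) and is then strictly enclosed by some $R_{\xi_{N,m}}$ (using Lemma \ref{lem:property}(2) and \eqref{eq:notinclude} to upgrade a single intersection point to containment). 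The condition $\partial S^*\cap \Gamma\neq \emptyset$ follows by the same argument as in the existence step, so $S^*$ is a cell and Zorn yields a maximal cell containing $x$.

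For (2), suppose $S$ is a maximal cell and $\xi$ a closed orbit with $R_\xi\cap S\neq \emptyset$. By Lemma \ref{lem:property}(4), either $R_\xi\subset S^\circ$ (the desired conclusion) or $S\subset R_\xi^\circ$. In the latter case, applying the existence construction of (1) to any point $w\in \xi(\Rset)$ (so that $\xi_w=\xi$ is closed and lies in $\Sigma_w$) produces a cell $S'$ with $R_\xi\subset S'$; then $S\subsetneq R_\xi\subset S'$, with strict inclusion $S\subsetneq R_\xi$ since $S=R_\xi$ would pit $\partial S\cap \Gamma\neq \emptyset$ against $\partial R_\xi\cap \Gamma=\emptyset$. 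This contradicts maximality of $S$, so $R_\xi\subset S^\circ$. The main obstacle throughout is the recurring verification $\partial S^*\cap \Gamma\neq \emptyset$, which requires carefully combining flow invariance of $\partial S^*$, the foliation of Lemma \ref{lem:foliation}, and cofinality of the defining sequence to rule out closed orbits sitting entirely in $\partial S^*$, all while accommodating possibly degenerate critical points whose presence forbids cleaner topological pictures.
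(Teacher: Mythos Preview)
Your existence step and your argument for (2) are sound, and your proposal differs from the paper mainly in how maximality is obtained: you invoke Zorn's lemma, whereas the paper constructs the maximal cell directly and proves maximality via Corollary~\ref{cor:con}, avoiding Zorn entirely. The paper's dichotomy is also slightly different: instead of splitting on whether the orbit $\xi_x$ itself is closed or asymptotic, it splits on whether $x\in R_\xi$ for \emph{some} closed orbit $\xi$. In that case one sets $\mathcal{O}_x=\bigcup_{\eta:\,x\in R_\eta}R_\eta$ and $S_x=\overline{\mathcal{O}_x}$, and Corollary~\ref{cor:con} shows that every cell $S'\ni x$ satisfies $S'\subseteq S_x$; this yields maximality and uniqueness at once, and the uniqueness is exactly what drives the paper's two-line proof of (2). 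In the complementary case ($x\notin R_\xi$ for every closed $\xi$), maximality of the cell from Corollary~\ref{cor:twodirections} is again a one-line consequence of Corollary~\ref{cor:con}: a strictly larger cell would, via that corollary, place $x$ inside some $R_\xi$.

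There is, however, a genuine gap in your Zorn step. You assert that $\partial S^{*}\cap\Gamma\neq\emptyset$ ``by the same argument as in the existence step'', but that argument, in the subcase where $\xi_y$ is closed and the approximating orbits sit inside $\xi_y$, relied on $\{\xi_m\}$ being cofinal in $\Sigma_x$: from $x\in R_{\xi_y}$ and cofinality you obtained $R_{\xi_y}\subset R_{\xi_{m'}}$ and hence a contradiction. Your diagonal sequence $\{\tilde\xi_k\}$ carries no such cofinality in $\Sigma_x$; nothing you have written rules out $S^*=R_{\xi_y}$ for some closed orbit $\xi_y$, with every $S_n\subset R_{\xi_y}^\circ$ (Lemma~\ref{lem:property}(4) only forces $S_n\subset R_{\xi_y}^\circ$ here, which is no contradiction). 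The gap can be repaired---if $S^*=R_{\xi_y}$ then $x\in R_{\xi_y}$, so your closed-case construction applied to any point of $\xi_y(\Rset)$ produces a cell $S''\supsetneq R_{\xi_y}=S^*$ with $x\in S''$, and $S''$ serves as the required upper bound---but this is not ``the same argument'', and once you supply $S''$ you are essentially executing the paper's direct $S_x$ construction anyway. The route through Corollary~\ref{cor:con} is both shorter and sidesteps the issue entirely.
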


\begin{proof}  \emph{Proof of (1)}. Fix $x\in \Rset^2\backslash \Gamma$,  we consider two settings.

\smallskip

\emph{Case 1:  There exists a closed orbit $\xi$ such that $x\in R_\xi$}. Let $J_x$ be the non-empty set of all closed orbits $\eta$ subject to $x\in R_{\eta}$. Consider the set
\be\label{eq:construction1}
\mathcal{O}_x := \cup_{\eta\in J_x}R_{\eta}.
\ee
 Due to the local foliation near a closed orbit (Lemma \ref{lem:foliation}),   for any $\eta\in J_x$, there exists $\eta'$ such that $R_\eta\subset R_{\eta'}^{\circ}$. 
Hence, we see that $\mathcal{O}_x$ is open.
\smallskip

\emph{Claim:  $S_x =\overline {\mathcal{O}_x}$ is the unique maximal  cell containing $x$}.   

\smallskip
\emph{Step 1: We show $S$ is a cell.} Due to \eqref{eq:notinclude}, for $\eta_1, \eta_2\in J_x$, either $R_{\eta_1}\subset R_{\eta_2}$ or $R_{\eta_2}\subset R_{\eta_1}$. Accordingly, there exists a strictly increasing sequence $\{R_{\xi_m}\}_{m\geq 1}$ such that $\xi_m\in J_x$ and
 $$
 \mathcal{O}_x =\cup_{m\geq 1}R_{\xi_m}^{\circ}=\cup_{m\geq 1}R_{\xi_m}. 
 $$

 It remains to show that $\partial S_x\cap \Gamma\not=\emptyset$.  In fact, for any $y\in \partial S_x$, if $y\not\in \Gamma$, then since $S_x$ is flow invariant and the flow is a homeomorphism,  $\xi_x(\Rset)\subset \partial S_x$.  Similar to the proof of (3) of Lemma \ref{lem:property},  $\xi_y$ has to be asymptotic to $\Gamma$ and $\partial S_x\cap \Gamma\not=\emptyset$.  Otherwise, $\xi_y$ is a closed orbit and $S_x\subset R_{\xi_y}$, which contradicts to the definition of $S_x$. 
 
 \smallskip

 \emph{Step 2: We verify the maximality of $S$ and its uniqueness.}  Assume that $S'$ is a cell such that $x\in S'$. Since $x\in S_x^{\circ}$, due to Corollary \ref{cor:con} and the definition of $S_x$, we must have $S'\subseteq S_x$.
 \smallskip

\emph{Case 2: $x \not\in R_{\eta}$ for any periodic orbit $\eta$}. First by Lemma \ref{lem:threecases} and the non-existence of non-contractible periodic orbit, the orbit $\xi_x = \xi(\cdot;x)$ is asymptotic to $\Gamma$.  By Corollary \ref{cor:twodirections}, there is a cell $S$ such that $x\in \partial S$. 

\smallskip

\emph{Maximality of $S$}: Assume that $S'$ is a cell such that $S\subset S'$.  According to  Corollary \ref{cor:con},  there exists a closed orbit $\xi$ such that
$$
S\subset R_\xi\subset S'.
$$
This implies that $x\in R_\xi$, which contradicts to the choice of $x$.

\medskip
 
 \emph{Proof of (2)}. Assume that $R_\xi\cap S\not=\emptyset$ so we can find $x\in R_\xi\cap S$. Then $x\not\in \Gamma$, and by the construction of the unique maximal cell $S_x$ containing $x$ in the proof of (1), we must have $S=S_x$ and $R_\xi\subset \mathcal{O}_x\subset S_x^{\circ}$.
 \end{proof}

We say that two cells $S_1$ and $S_2$ of $V$ are adjacent if $\partial S_1\cap \partial S_2 \neq \emptyset$. Similarly to the proof of Corollary \ref{cor:twodirections},  we have the following corollary.  
 
 \begin{cor}\label{cor:adjacent} Suppose that $V$ does not admit any non-contractible periodic orbit.  Suppose that $S_1$ is a maximal cell and $x\in \partial S_1\backslash \Gamma$.  Then there exists a different maximal cell $S_2$ such that $x\in \partial S_2$.  In particular, $S_1$ and $S_2$ are adjacent cells. 
 \end{cor}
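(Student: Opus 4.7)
The plan is to build $S_2$ on the side of $x$ opposite to $S_1$ by using Poincar\'e recurrence to generate closed orbits accumulating at $x$ from that side, and then invoke Corollary \ref{cor:con} to conclude that $x$ remains on the boundary of the resulting maximal cell.

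First, I set up the local picture at $x$. Let $c = H(x)$; by Lemma \ref{lem:property}(2), $c = H\rvert_{\partial S_1}$, and by Lemma \ref{lem:property}(3) the orbit $\xi_x$ is asymptotic to $\Gamma$. Lemma \ref{lem:onedirection} applied to $S_1$ lets me assume, after possibly swapping $\pm$, that $U_x^+(c) \subset S_1^\circ$ and $U_x^-(c) \cap S_1 = \emptyset$. The maximality of $S_1$ together with Lemma \ref{lem:maximum}(2) yields a key observation: no closed orbit $\eta$ encloses $x$, for otherwise $R_\eta \cap S_1 \neq \emptyset$ would force $R_\eta \subset S_1^\circ$ and hence $x \in S_1^\circ$, contradicting $x \in \partial S_1$.

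Second, I construct the candidate cell. The Poincar\'e recurrence theorem for the volume-preserving flow of $V$, combined with Lemma \ref{lem:threecases} and the absence of non-contractible periodic orbits, lets me choose a sequence $\tilde x_m \in U_x^-(c)$ with $\tilde x_m \to x$, each lying on a closed orbit $\tilde \xi_m$; after passing to a subsequence, I may take $\tilde c_m := H(\tilde x_m)$ strictly increasing up to $c$. Fix $y = \tilde x_1$; by Lemma \ref{lem:maximum}(1) (Case 1) there is a unique maximal cell $S_2$ containing $y$, and since $y \in U_x^-(c) \subset \Rset^2 \setminus S_1$, automatically $S_2 \neq S_1$. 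To verify $x \in S_2$, I apply Lemma \ref{lem:maximum}(2) to $S_1$ and $\tilde \xi_m$: since $\tilde x_m \in R_{\tilde \xi_m} \setminus S_1$, we must have $R_{\tilde \xi_m} \cap S_1 = \emptyset$, so $x \notin R_{\tilde \xi_m}$. Then Lemma \ref{lem:twoparts} forces $U_x^-(\tilde c_m) \subset R_{\tilde \xi_m}$, since the other alternative would put $x$ in $R_{\tilde \xi_m}$. For $m \geq 2$, $H(y) = \tilde c_1 < \tilde c_m$ gives $y \in U_x^-(\tilde c_m) \subset R_{\tilde \xi_m}$, so $\tilde \xi_m \in J_y$ and $\tilde x_m \in R_{\tilde \xi_m} \subset S_2$; closedness of $S_2$ and $\tilde x_m \to x$ yield $x \in S_2$.

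The main obstacle will be to rule out $x \in S_2^\circ$. If $x \in S_2^\circ$, then a neighborhood of $x$ lies in $S_2^\circ$ and meets $U_x^+(c) \subset S_1^\circ$, so $S_1^\circ \cap S_2^\circ \neq \emptyset$. Applying Corollary \ref{cor:con} to the distinct maximal cells $S_1$ and $S_2$: alternative (1) is incompatible with an open intersection, while alternative (2) produces a closed orbit $\xi$ with either $S_1 \subset R_\xi \subset S_2^\circ$ or $S_2 \subset R_\xi \subset S_1^\circ$. In either case, Lemma \ref{lem:maximum}(2) applied to the inner maximal cell $S_i$ forces $R_\xi \subset S_i^\circ$, hence $R_\xi = S_i$; but then $\partial S_i = \xi(\Rset)$ is disjoint from $\Gamma$, contradicting the defining property $\partial S_i \cap \Gamma \neq \emptyset$ of a cell. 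Therefore $x \in \partial S_2$, and by definition $S_1$ and $S_2$ are adjacent.
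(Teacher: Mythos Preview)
Your proof is correct, but it takes a somewhat different route from the paper's. Both arguments begin identically: fix the orientation via Lemma~\ref{lem:onedirection}, use Poincar\'e recurrence to produce closed orbits $\tilde\xi_m$ accumulating at $x$ from the side away from $S_1$, and use Lemma~\ref{lem:maximum}(2) to get $R_{\tilde\xi_m}\cap S_1=\emptyset$ together with Lemma~\ref{lem:twoparts} to pin down which half-neighborhood lies inside $R_{\tilde\xi_m}$. The divergence is in how $S_2$ is obtained and how $x\in\partial S_2$ is established. The paper shows directly that $\{R_{\tilde\xi_m}\}$ is strictly increasing, \emph{defines} $S_2=\overline{\cup_m R_{\tilde\xi_m}}$, and reads off $x\in\partial S_2$ immediately from $U_x^{+}(c)\cap R_{\tilde\xi_m}=\emptyset$ (in your orientation); it then checks $\partial S_2\cap\Gamma\ne\emptyset$ via $\xi_x$ and proves maximality of $S_2$ using Corollary~\ref{cor:con}. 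You instead invoke the construction in the proof of Lemma~\ref{lem:maximum}(1) (Case~1) to get the maximal cell $S_2$ through $y=\tilde x_1$ ``for free'', show $x\in S_2$ by $\tilde x_m\in R_{\tilde\xi_m}\subset\mathcal{O}_y\subset S_2$, and then use Corollary~\ref{cor:con} to rule out $x\in S_2^{\circ}$. Your approach trades the explicit verification that $S_2$ is a maximal cell for a slightly more indirect argument that $x$ is a boundary point; the paper's approach is more self-contained and gives $x\in\partial S_2$ in one line. Two minor remarks: your ``key observation'' that no closed orbit encloses $x$ is correct but never actually used later; and in the last step, from $S_i\subset R_\xi\subset S_i^{\circ}$ you get $S_i=S_i^{\circ}$, so $\partial S_i=\emptyset$, which already contradicts the cell condition---your phrasing via $\partial S_i=\xi(\Rset)$ reaches the same contradiction but is slightly roundabout.
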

 \begin{proof}  Let $c = H(x) = H\rvert_{\partial S_1}$, and let $\xi_x$ be the orbit $\xi(\cdot;x)$. Owing to Lemma \ref{lem:onedirection},  we may assume without loss of generality that 
 $$
 U_{x}^{-}(c)\subset S_1^{\circ} \quad \text{and} \quad U_{x}^{+}(c)\subset \Rset^2\backslash S_1.
 $$
Choose a sequence $\{\xi_m\}_{m\ge 1}$ of closed orbits so that
\begin{equation*}
 x_m := \xi_m(0)\in U_x^{+}(c) \to x, \; c_m := H\rvert_{\xi_m(\Rset)} \searrow c, \; \quad \text{ as } \, m\to \infty.
 \end{equation*}
 Above, we may assume that $\{c_m\}$ is strictly decreasing. 

 \medskip
 
 Since $\xi_m(0)\notin S_1$,  by Lemma \ref{lem:maximum},  we must have that $S_1\cap R_{\xi_m}=\emptyset$ for all $m\geq 1$. Hence $U_{x}^{-}(c)\cap R_{\xi_m}=\emptyset$ for all $m\geq 1$. Also,  note that $x\in S_1\cap U_x^{-}(c_m)$.  So,   Lemma \ref{lem:twoparts} leads to
 $$
 U_x^{+}(c_m)\subset R_{\xi_m} \quad \mathrm{and} \quad  U_x^{-}(c_m)\cap R_{\xi_m} =\emptyset, \quad \forall m\ge 1.
 $$
Since $U^+_x(c_m) \subset U^+_x(c_{m+1})$, we see that $R_{\xi_m}\cap R_{\xi_{m+1}}\ne \emptyset$. Also, note that $\xi_{m+1}(0)\in R_{\xi_{m+1}}\cap U_x^{-}(c_m)\subseteq  R_{\xi_{m+1}}\backslash R_{\xi_m}$.  In view of \eqref{eq:notinclude}, we have $R_{\xi_m}\subset R_{\xi_{m+1}}$ for all $m\geq 1$.  Let
 $$
 S_2=\overline {\cup_{m\geq 1}R_{\xi_m}}.
 $$
 Clearly, $S_2$ is flow invariant and $x\in S_2$.  Since $S_2\cap U_{x}^{-}(c)=\emptyset$, $x\in \partial S_2$.  This implies that $\xi_x(\Rset)\in \partial S_2$ since the flow is a homeomorphism. Note that $\xi_x$ is asymptotic to $\Gamma$ by (3) of Lemma \ref{lem:property}. So $\partial S_2\cap \Gamma\not=\emptyset$. This shows that $S_2$ is a cell and $x\in \partial S_1\cap \partial S_2$.   Since $S_1\cap R_{\xi_m}=\emptyset$,  $S_1\ne S_2$. Suppose $S_3$ is another cell such that $S_2\subset S_3$.  Then by Corollary \ref{cor:con}, there exists a closed orbit $\xi$ such that
 $$
 S_2\subset R_\xi\subset S_3.
 $$
 In particular, $x\in R_\xi\cap S_1$. By the maximality of $S_1$ and (2) of Lemma \ref{lem:maximum}, $R_\xi\subset S_1$, which leads to $S_2\subset S_1$, which is impossible by the construction of $S_2$. Hence, $S_2$ is also maximal. This completes the proof.
 \end{proof}

\begin{lem}\label{lem:bound} Assume that  $x,y\in \Rset^2$ satisfy  $H(x)=H(y)$ and $y\notin \xi_x(\Rset)$. Here $\xi_x = \xi(\cdot;x)$ is the orbit with $\xi_x(0)=x$.  Suppose that $\gamma:[0,T]\to  \Rset^2$ satisfies that $\gamma (0)=x$,  $\gamma (T)=y$  and
 $$
 \max_{t\in [0,T]}|H(\gamma(t))-H(x)|\leq \theta
 $$
 for some $\theta>0$.  Then there exists $t_0\in [0,T]$, such that 
 $$
 |V(\gamma(t_0))|\leq 3(K_0+1)\theta^{1\over 3}.
 $$
Here $K_0=\|V\|_{W^{1,\infty}(\Rset^2)}$.
 \end{lem}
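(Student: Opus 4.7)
I would argue by contradiction. Suppose $|V(\gamma(t))| > \delta := 3(K_0+1)\theta^{1/3}$ for every $t\in[0,T]$. Since $V=D^\perp H$, this gives $|DH(\gamma(t))|>\delta$ along $\gamma$. The strategy is to project $\gamma$ continuously onto the level set $\{H=H(x)\}$ via a gradient-flow map $\pi$, producing a continuous curve $\tilde\gamma := \pi\circ\gamma$ in $\{H=H(x)\}$ with $\tilde\gamma(0)=x$, $\tilde\gamma(T)=y$, whose image avoids $\Gamma$. Since at any non-critical point the level set of $H$ coincides locally with an orbit of $V$, the connected components of $\{H=H(x)\}\setminus\Gamma$ are precisely the orbits of $V$ at level $H(x)$; the connected image $\tilde\gamma([0,T])$ must therefore lie in $\xi_x(\Rset)$, forcing $y\in\xi_x(\Rset)$ and contradicting the hypothesis $y\notin\xi_x(\Rset)$.

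For the projection, given $z$ near $\gamma([0,T])$ with $|H(z)-H(x)|\leq\theta$, I would solve the ODE $\eta' = \mathrm{sgn}(H(x)-H(z))\cdot DH(\eta)$, $\eta(0)=z$, until $H(\eta(s_*))=H(x)$, and set $\pi(z):=\eta(s_*)$. The key quantitative estimate: because $V\in W^{1,\infty}$ implies $DH$ is $K_0$-Lipschitz, $|DH|\geq\delta/2$ throughout the ball $B_{\delta/(2K_0)}(\gamma(t))$ for every $t\in[0,T]$; inside this ball the flow is regular and $H$ changes at rate $|DH|^2\geq\delta^2/4$, so level $H(x)$ is reached within time $s_*\leq 4\theta/\delta^2$, with total displacement at most $K_0 s_*\leq 4K_0\theta/\delta^2$. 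The choice of $\delta$ gives $\delta^3=27(K_0+1)^3\theta\geq 8K_0^2\theta$, equivalent to $4K_0\theta/\delta^2\leq\delta/(2K_0)$, so the flow never leaves the good ball and $\pi(\gamma(t))$ is well-defined. Continuity of $\tilde\gamma=\pi\circ\gamma$ on $[0,T]$ then follows from continuous dependence of the ODE on its initial data together with the continuity of the stopping time $s_*$, while $\pi$ fixes any point already at level $H(x)$, so $\tilde\gamma(0)=x$ and $\tilde\gamma(T)=y$.

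The main obstacle I expect is this quantitative calibration between $\theta$ and $\delta$: the cubic-root exponent is forced by balancing the good-ball radius $\sim\delta/K_0$ (from the Lipschitz bound on $DH$) against the displacement $\sim K_0\theta/\delta^2$ needed to change $H$ by at most $\theta$, which yields $\delta\sim(K_0^2\theta)^{1/3}$. The specific prefactor $3(K_0+1)$ in the statement is chosen so that the clean inequality $8K_0^2\leq 27(K_0+1)^3$ holds uniformly in $K_0\geq 0$, giving a single closed-form estimate valid in both the small-$K_0$ and large-$K_0$ regimes.
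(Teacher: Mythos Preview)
Your argument is correct and is essentially the dual of the paper's. The paper flows \emph{outward} along $DH$ from points on the orbit $\xi_x$ to build a tube $\mathcal{C}=\{\eta_t(s):t\in\Rset,\ |s|\le\theta^{1/3}/(K_0+1)\}$, observes that $y\notin\mathcal{C}$, and then (invoking the orbit trichotomy of Lemma~\ref{lem:threecases} to control the ``ends'' of the tube near $\Gamma$) argues that $\gamma$ must cross the lateral boundary at some $\gamma(r_0)=\eta_{t_0}(\pm\theta^{1/3}/(K_0+1))$; the constraint $|H(\gamma(r_0))|\le\theta$ then forces $|V(\xi_x(t_0))|$, and hence $|V(\gamma(r_0))|$, to be small. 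You instead flow \emph{inward} from each $\gamma(t)$ to the level set $\{H=H(x)\}$, and use the purely local fact that connected components of $\{H=H(x)\}\setminus\Gamma$ are exactly single $V$-orbits (each component being a connected $1$-manifold on which $V$ is tangent and non-vanishing, hence an open-and-closed subset of itself under the flow) to conclude $y\in\xi_x(\Rset)$. The quantitative core---balancing the Lipschitz radius $\delta/K_0$ against the gradient-flow displacement $K_0\theta/\delta^2$, yielding $\delta^3\ge 8K_0^2\theta$---is identical in both arguments. Your route trades the global orbit classification for an elementary connectedness step, which is arguably cleaner. One cosmetic point: writing $\eta'=\mathrm{sgn}(H(x)-H(z))\,DH(\eta)$ makes the sign jump where $H(\gamma(t))=H(x)$; it is tidier to take $\eta'=DH(\eta)$ throughout and let $s_*\in\Rset$ be the unique (possibly negative) time with $H(\eta(s_*))=H(x)$, which exists and depends continuously on the initial point since $H\circ\eta$ is strictly increasing in the good ball.
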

 
 \begin{proof}  Assume that 
 \be\label{eq:nocritical}
 \gamma([0,T])\subset \Rset^2\backslash \Gamma.
 \ee
 Otherwise, the conclusion is trivial. For convenience, write $K_1=K_0+1$.   Without loss of generality, let $H(x)=H(y)=0$, then
 \be\label{eq:allequal}
 H(\xi(t;x))\equiv 0 \quad \text{for all $t\in \Rset$}
 \ee
 and
 \be\label{eq:thetabound}
 \max_{t\in [0,T]}|H(\gamma(t))|\leq \theta.
 \ee
For $t\in \Rset$, let $\eta_t(s)$ satisfy that 
 $$
 \begin{cases}
 \dot \eta_t(s)=DH(\eta_t(s))\\[3mm]
 \eta_t(0)=\xi_x(t).
 \end{cases}.
 $$
Clearly, if $\tilde t\in \Rset$    
satisfies $|DH(\xi_x(\tilde t))|=|V(\xi_x(\tilde t))|\geq 2K_1\theta^{{1\over 3}}$,
then for all $|s|\leq {\theta^{\wj{{1\over 3}}}\over K_1}$, we have 
\begin{equation*}
\begin{aligned}
|DH(\eta_{\tilde t}(s))| &\ge |DH(\eta_{\tilde t}(s)) - DH(\eta_{\tilde t}(0)) + DH(\xi_x(\tilde t))| \\
&\ge |V(\xi_x(\tilde t))| - K_0|\eta_{\tilde t}(s)-\eta_{\tilde t}(0)| \ge 2K_1\theta^{\wj{1\over 3}} - K_0^2 |s| \ge K_1\theta^{\frac13}.
\end{aligned}
\end{equation*}
By (\ref{eq:allequal}),  mean value theorem and the ODE satisfied by $\eta_{\tilde t}(\cdot)$ we deduce, for some $\lambda\in (0,1)$,
 \be\label{eq:positive}
 \left|H\left(\eta_{\tilde t}\left(\pm {\theta^{{1\over 3}} K_1^{-1}}\right)\right)\right| = \left|DH(\eta_{\tilde t}(\pm \lambda \theta^{1\over 3}K_1^{-1}))\right|^2 |\theta^{{1\over 3}} K_1^{-1}|\geq K_1 {\theta}>\theta. 
  \ee
 
 Since $H(y)=H(x)$ and  $H(\eta_t(s))\not=H(x)$ for $s\not=0$,  we have that $y\not= \eta_t(s)$ for all $t,s\in \Rset$. In particular, 
 $$
 y\notin \mathcal{C}=\left\{\eta_t(s)|\ t\in \Rset, \ s\in \left[- {\theta^{1\over 3}K_1^{-1}},  \ {\theta^{1\over 3}K_1^{-1}}\right]\right\}.
 $$
Owing to Lemma \ref{lem:threecases} and (\ref{eq:nocritical}),  we have either  $\overline {\mathcal{C}}=\mathcal{C}$ or $\overline {\mathcal{C}}\subset\mathcal{C}\cup \Gamma$. Thus,  there exist $t_0\in \Rset$ and $r_0\in (0,T)$  such that 
$$
\gamma(r_0)= \eta_{t_0}\left( \theta^{1\over 3}K_1^{-1}\right) \quad \text{or} \quad \gamma(r_0)= \eta_{t_0}\left( {-}\theta^{1\over 3}K_1^{-1}\right).
$$
Owing to (\ref{eq:thetabound}) and (\ref{eq:positive}), we must have
$$
|DH(\xi_x(t_0))| = |V(\eta_{t_0}(0))|\leq 2K_1\theta^{1\over 3}.
$$
This immediately leads to 
$$
|DH(\gamma(r_0))|\leq |V(\eta_{t_0}(0))| + K_0|\gamma(r_0)-\eta_{t_0}(0))| \le 3K_1\theta^{1\over 3}.
$$
The proof is hence complete.
\end{proof}
The following is an easy corollary.

\begin{cor}\label{cor:crosscell}  Let $S$ be a cell and  $\gamma:[0,T]\to \Rset^2$ be a control path satisfying $|\dot \gamma(t)+AV(\gamma(t))|\leq 1$ for a.e.\,$t\in \Rset$. Given $A\geq 3$, if $\gamma(0)$ and $\gamma(T)$ are both on $\partial S$ and are not on the same orbit, then at least one of the following holds:

\begin{enumerate}

\item[(1)]$T\geq {\log A\over A}$; or

\item [(2)] there exists $t_0\in [0,T]$ such that
$$
|V(\gamma(t_0))|\leq 3(K_0+1)\left({K_0\log A\over A}\right)^{1\over 3}.
$$
Here $K_0=||V||_{W^{1,\infty}(\Rset^2)}$.
\end{enumerate}

\end{cor}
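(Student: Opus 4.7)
The corollary is an almost immediate reduction to Lemma \ref{lem:bound}, with $x = \gamma(0)$, $y = \gamma(T)$, and a value of $\theta$ chosen so as to bound the oscillation of $H$ along the controlled path $\gamma$. Since $\gamma(0), \gamma(T) \in \partial S$ and $H \equiv c$ on $\partial S$ by part (2) of Lemma \ref{lem:property}, both endpoints lie on the level set $\{H = c\}$; the assumption that they lie on different orbits is precisely the hypothesis $y \notin \xi_x(\Rset)$ required by the lemma. The only real work is to quantify how much $H$ can vary along $\gamma$ when conclusion (1) fails.

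First I would dispose of the degenerate edge case in which either $\gamma(0)$ or $\gamma(T)$ lies in $\Gamma$: there $V$ vanishes, so conclusion (2) holds trivially at $t_0 = 0$ or $t_0 = T$. Otherwise both endpoints lie off $\Gamma$, their orbits are well-defined $C^{1,1}$ curves, and the hypothesis of Lemma \ref{lem:bound} is met. Assume now that conclusion (1) fails, i.e.\ $T < \log A / A$; I shall derive (2). The key identity is the orthogonality
\begin{equation*}
V(z)\cdot DH(z) \equiv 0, \qquad z \in \Rset^2,
\end{equation*}
which follows from $V = D^\perp H$. Along $\gamma$ this gives
\begin{equation*}
\frac{d}{dt}H(\gamma(t)) \;=\; DH(\gamma(t))\cdot \dot\gamma(t) \;=\; DH(\gamma(t))\cdot\bigl(\dot\gamma(t) + AV(\gamma(t))\bigr),
\end{equation*}
since the $-AV\cdot DH$ contribution cancels. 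Using $|DH| = |V| \le K_0$ together with the control constraint $|\dot\gamma + AV| \le 1$, one obtains $|dH(\gamma)/dt| \le K_0$, and integrating from $0$ yields
\begin{equation*}
\max_{t\in[0,T]} \bigl|H(\gamma(t)) - c\bigr| \;\le\; K_0 T \;<\; \frac{K_0 \log A}{A} \;=:\; \theta.
\end{equation*}

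Finally, applying Lemma \ref{lem:bound} with this $\theta$ to the path $\gamma$ between $x = \gamma(0)$ and $y = \gamma(T)$ produces some $t_0 \in [0,T]$ with
\begin{equation*}
|V(\gamma(t_0))| \;\le\; 3(K_0 + 1)\theta^{1/3} \;=\; 3(K_0+1)\left(\frac{K_0 \log A}{A}\right)^{1/3},
\end{equation*}
which is exactly (2). The condition $A \ge 3$ only ensures $\log A > 0$ so that $\theta$ is positive and the threshold in (1) is meaningful. I do not foresee any substantive obstacle: the proof is a short bookkeeping argument built around the single key observation that the Hamilton-Jacobi drift $AV$ is tangent to the level sets of $H$, so only the norm-one control term contributes to the variation of $H$ along $\gamma$.
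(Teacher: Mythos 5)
Your proposal is correct and follows essentially the same route as the paper: both use the orthogonality $DH\cdot V=0$ together with the control constraint $|\dot\gamma+AV(\gamma)|\le 1$ to bound the oscillation of $H$ along $\gamma$ by $K_0T\le K_0\log A/A$ when (1) fails, and then invoke Lemma \ref{lem:bound} with $\theta=K_0\log A/A$. The only cosmetic differences are that you integrate the a.e.\ derivative bound where the paper invokes the mean value theorem, and you separately note the trivial case of endpoints in $\Gamma$.
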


\begin{proof}
    
Since $\gamma(0), \gamma(T)\in \partial S$, $H(\gamma(0))=H(\gamma(T))$.  If $T\leq {\log A\over A}$, then using the fact that $DH\cdot V = 0$ and via mean value theorem we have
\begin{equation*}
\begin{aligned}
H(\gamma(t_1))-H(\gamma (t_2)) &= DH(\gamma(t_*))\cdot \gamma'(t_*)(t_2-t_1) \\
&= DH(\gamma(t_*))\cdot [\gamma'(t_*)+AV(\gamma(t_*))](t_2-t_1),
\end{aligned}
\end{equation*}
for some $t_* \in [t_1,t_2]$, and, hence
\be\label{eq:smalldifference}
 |H(\gamma(t_1))-H(\gamma (t_2))| \leq {K_0\log A\over A }\quad \text{for $0\leq t_1\leq t_2\leq T$}.
 \ee
Then the desired result follows from Lemma \ref{lem:bound} by taking $\theta={K_0\log A\over A }$.
\end{proof}

\begin{figure}
\begin{center}
\includegraphics[scale=0.5]{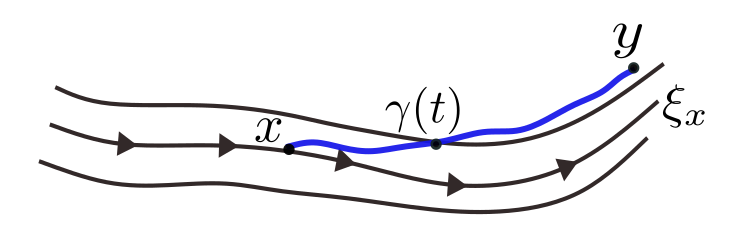}
\caption{\label{fig:boundaryC} $\gamma$ exists from the boundary of $\mathcal{C}$}
\end{center}
\end{figure}

 \section{Proof of Theorem \ref{theo:main}}
 
 If there is a non-contractible periodic orbit, then the conclusion follows immediately from (ii) of Theorem 1.2 in \cite{XY2014}. So we assume that there is no non-contractible periodic orbit.  Recall that $M$ is the bound on the diameter of swirls; see \eqref{eq:boundedswirl}. Hence the diameter of all cells is not greater than $M$. Throughout the proof,  $C$ represents a constant that depends only on $K_0= \|V\|_{W^{1,\infty}}$. 
 
 Without loss of generality, let $p=(-1,0)$. The arguments for $p=(1,0), (0,1), (0,-1)$ are similar. The upper bound for all unit direction $p$ follows from the convexity and homogeneity of $s_{\rm T}(p,A)$ with respect to $p$. 
 For $k\in \Nset$, write two lines
 $$
 L_1:\ \{x_1 = (k-1)M\}, \quad L_2:\ \{x_1 =(k+1)M+1\}. 
 $$
 
In view of the control formulation \eqref{eq:control} it suffices to show that if  $\eta$ is an admissible path connecting lines $L_1$ and $L_2$, that is, 
\begin{equation*}
\begin{aligned}
&\gamma \in W^{1,\infty}([0,T],\Rset^2), \; \gamma(0)\in L_1, \; \gamma(T)\in L_2,\\
&|\dot \gamma(t)+AV(\gamma(t))|\leq 1 \;\text{ for a.e. }\,  t\in [0,T],
\end{aligned}
\end{equation*}
then there exists a positive constant $C$ so that
 \be\label{eq:bound}
 T\geq {C\log A\over A}  \quad \text{for all $A\geq 3$}.
 \ee
  
 \medskip
 
 \begin{figure}
\begin{center}
\includegraphics[scale=0.4]{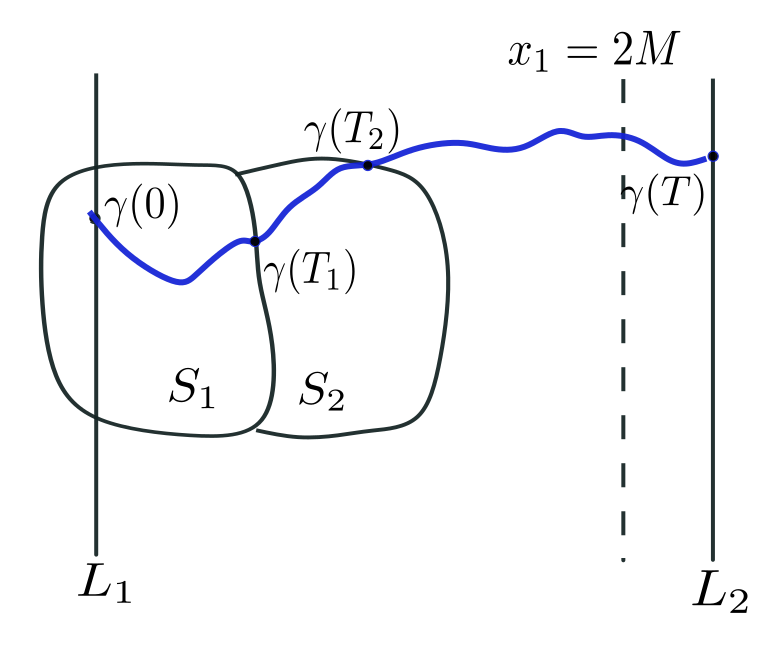}
\caption{\label{fig:control}Control path}
\end{center}
\end{figure}
\begin{proof}
    
Without loss of generality, let $k=1$. Fix $A\geq 3$.   If $T\geq {\log A\over A}$, we are done. So let us assume that $T\leq {\log A\over A}$. 

\medskip
 
 \emph{Claim}: \emph{There exists $t_0\in [0, T]$ such that
 $$
 |V(\gamma(t_0))|\leq 3(K_0+1)\left({K_0\log A\over A}\right)^{1\over 3}.
 $$
 }
 
Let us assume that $\gamma([0,T])\subset \Rset^2\backslash \Gamma$. Otherwise, the claim holds easily since $|V|$ vanishes on $\Gamma$. By Lemma \ref{lem:maximum},  let $S_1$ be the maximal cell that contains  $\gamma(0)$. Then $S_1\subset \{x_1\leq M\}$.  Denote the last exit time from $S_1$ by
 $$
 T_1=\max\{t\in [0,T]:\ \gamma(t)\in S_1\}.
 $$
 Then $\gamma(T_1)\in \{x_1\leq M\}\cap \partial S_1$ and $T_1 < T$; see Fig.\,\ref{fig:control}. Thanks to (3) of Lemma \ref{lem:property},  $\gamma(T_1)$ is on an orbit $\xi_1$ that is asymptotic to $\Gamma$. Owing to Corollary \ref{cor:adjacent},    let $S_2$ be the maximal cell that is adjacent to $S_1$ along $\xi_1$. Then $S_2\subset\{x_1\leq 2M\}$. Write $\bar x=\gamma(T_1)$. Owing to Lemma \ref{lem:onedirection}, we may assume that
 $$
U_{\bar x}^+(H(\bar x))\subset S_1^{\circ}\backslash S_2  \quad \mathrm{and}  \quad U_{\bar x}^{-}(H(\bar x))\subset S_2^{\circ}\backslash S_1. 
 $$
 Hence $\gamma$ must enter  $U_{\bar x}^{-}(H(\bar x))$ right after $T_1$. Write the last exit time from $S_2$ by
 $$
 T_2=\max\{t\in [T_1,T]:\ \gamma(t)\in S_2\}.
 $$
 Clearly, $T_2>T_1$,  $\gamma(T_2)\in  \{x_1\leq 2M\}\cap\partial S_2$ and $\gamma(T_2)\notin \xi_1(\Rset)$ due to the choice of $T_1$ and $\xi_1(\Rset)\subset S_1$; see Fig.\,\ref{fig:control}. Then our claim follows immediately from Corollary \ref{cor:crosscell}.

Now write $x_0=\gamma(t_0)$. We note that
$$
|V(x)| \le |V(x_0)| + K_0|x-x_0| \le C(\log A)^{1\over 3}A^{-{1\over 3}} + K_0|x-x_0|.  
 $$
Consider the function $r(t)=|\gamma(t)-x_0|$; we have
$$
 |r'(t)| \leq  |\gamma'(t)| \le 1 + A|V(\gamma(t))| \le C\left(Ar(t)+(\log A)^{1\over 3}A^{2\over 3}\right), \quad \text{for a.e. $t\geq t_0$}.
 $$
 Then for the path $\gamma$ to exit the ball $B_{1}(x_0)$, for $x_0 = \gamma(t_0)$, it takes an amount of time that is at least 
 $$
 \int_{0}^{1}{1\over C(\log A)^{1\over 3}A^{2\over 3}+CAs}\,ds \ge {C\log A\over A}.  
 $$
In view of $B_1(x_0)\subset \{x_1\leq 2M+1\}$, we see that \eqref{eq:bound} holds.
\end{proof}

\begin{rmk} Assume that $V$ does not have non-contractible periodic orbits.   If $\Gamma$ contains degenerate points,  $s_{\rm T}(p,A)$ might grow slower than $O(A/\log A)$.  If $V$ has no degenerate critical points, then using the nice structure established in \cite{A1991},  we can find a suitable control path to get the other direction $s_{\rm T}(p,A)\geq O(A/\log A)$. Hence $s_{\rm T}(p,A)=O(A/ \log A)$ in this situation.  Details are left to interested readers. 
\end{rmk}

\section{Other Examples} In this section, we look at several examples in the literature that possess Lagrangian chaos.

\subsection{Unsteady cellular flows} Let 
\begin{equation}
\label{eq:unsteady}
V(x,t)=(-H_{x_2}(x,t), H_{x_1}(x,t))
\end{equation}
for
$$
H(x,t)=U(t)\sin (2\pi (x_1+\alpha(t)))\sin (2\pi( x_2+\beta(t))).
$$
Here $U(t)$ is a periodic continuous function, $\alpha(t)$ and $\beta(t)$ are  periodic functions and H\"older continuous with exponent $r\in (0,1]$. Special cases like $\alpha(t)=B\sin (\omega t)$ and $\beta(t)=0$ have been considered in \cite{CTVV2003}.  The dependence of the turbulent flame speed $s_{\rm T}$ on the frequency $\omega$ are numerically investigated, which observed  interesting phenomena like frequency locking. Moreover, numerical results showed that the presence of Lagrangian chaos could either increase or decrease $s_{\rm T}$ compared to steady case ($B=0$) depending on values of the frequency $\omega$.  See also \cite{LXY2013} for similar discussions when $\alpha(t)$ is the Ornstein-Uhlenbeck process and $\beta(t)=0$.

\begin{theo} For the unsteady cellular flow \eqref{eq:unsteady}, change $V$ to $AV$ for $A>0$. Then the corresponding turbulent flame speed satisfies
\be\label{eq:time-upperbound}
s_{\rm T}(p,A)\leq {CA\over \log A}.
\ee
for any unit vector $p$ where $C$ is a constant depending only on $\|U(t)\|_{C^{0}(\Rset)}$, $\|\alpha(t)\|_{C^{0,r}(\Rset)}$ and $\|\beta(t)\|_{C^{0,r}(\Rset)}$.
\end{theo}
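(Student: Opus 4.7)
The plan is to adapt the steady cellular-flow traversal-time argument recalled after \eqref{eq:control} to the space-time periodic setting, with mollification absorbing the limited temporal regularity of $\alpha,\beta$. For the time-dependent G-equation $G_t+AV(x,t)\cdot DG+|DG|=0$ with initial data $g(x)=p\cdot x$, differentiating $s\mapsto G(\gamma(s),T-s)$ yields the control representation
\begin{equation}
-G(x,T;p)=\sup_{\gamma}(-p\cdot\gamma(T))
\end{equation}
where $\gamma\colon[0,T]\to\Rset^2$ is Lipschitz with $\gamma(0)=x$ and $|\dot\gamma(s)+AV(\gamma(s),T-s)|\le 1$ a.e. Since $s_{\rm T}(\cdot,A)$ is convex and positively $1$-homogeneous in $p$, it suffices to show $s_{\rm T}(\pm e_i,A)\le CA/\log A$ for $i=1,2$; the bound for any unit $p$ then follows by sublinear decomposition into the four axis directions. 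I will detail $p=-e_1$, where the task becomes bounding $\sup_\gamma\gamma_1(T)$.

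Observe $|V_1|\le 2\pi\|U\|_\infty|\sin(2\pi(x_1+\alpha(t)))|$; a direct change of variables $\gamma_1(s)\mapsto\gamma_1(s)+\alpha(T-s)$ would reduce the problem to the steady cellular integral, but $\alpha\in C^{0,r}$ is not differentiable in $s$. I would mollify, setting $\alpha_\epsilon=\alpha*\phi_\epsilon$, so that
$$\|\alpha-\alpha_\epsilon\|_\infty\le C\epsilon^r,\qquad \|\alpha_\epsilon'\|_\infty\le C\epsilon^{r-1},$$
with $C$ depending only on $\|\alpha\|_{C^{0,r}}$. Define $\tilde\gamma_1(s):=\gamma_1(s)+\alpha_\epsilon(T-s)$. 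Combining the control constraint with the Lipschitz estimate $|\sin 2\pi(\gamma_1+\alpha)|\le|\sin 2\pi\tilde\gamma_1|+2\pi\|\alpha-\alpha_\epsilon\|_\infty$ gives
$$|\dot{\tilde\gamma_1}(s)|\le 2\pi A\|U\|_\infty|\sin 2\pi\tilde\gamma_1(s)|+CA\epsilon^r+C\epsilon^{r-1}+1,$$
and the choice $\epsilon=1/A$ balances the two error terms at $O(A^{1-r})$, producing the single bound $|\dot{\tilde\gamma_1}|\le 2\pi A\|U\|_\infty|\sin 2\pi\tilde\gamma_1|+CA^{1-r}+1$.

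The steady-state integral estimate then gives that the time for $\tilde\gamma_1$ to increase by one unit is at least
$$\int_0^{1/4}\frac{dx}{4\pi^2 A\|U\|_\infty\, x+CA^{1-r}+1}=\frac{1}{4\pi^2A\|U\|_\infty}\log\!\left(1+\frac{\pi^2A\|U\|_\infty}{CA^{1-r}+1}\right)\ge\frac{c\log A}{A}$$
for $A$ large (the logarithm evaluates to $\sim r\log A$). Summing over unit intervals and using $|\gamma_1-\tilde\gamma_1|\le\|\alpha\|_\infty$, I would obtain $\gamma_1(T)-\gamma_1(0)\le CAT/\log A+O(1)$ for every admissible $\gamma$. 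Dividing by $T$ and passing to the limit yields $s_{\rm T}(-e_1,A)\le CA/\log A$; the other three axis directions are identical, with the bound on $\gamma_2$ using a mollification of $\beta$ instead.

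The main obstacle is the low temporal regularity of $\alpha,\beta$, which forbids a naive change of variables. The mollification creates two competing errors — a phase error $\epsilon^r$ inside the sine and a velocity correction $\epsilon^{r-1}$ from $\alpha_\epsilon'$ — and the delicate point is that $\epsilon=1/A$ is the unique scale balancing them such that both remain subdominant to the $O(A)$ oscillation of $\sin$ on precisely the logarithmic length scale governing the traversal-time integral; any other scale destroys the $\log A/A$ floor. (If $r=1$ no mollification is needed and the argument collapses to the steady cellular case.)
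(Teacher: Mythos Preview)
Your proposal is correct and follows the same overall strategy as the paper: reduce to axis directions by convexity/homogeneity, then bound the traversal time of $\gamma_1$ across a unit cell by integrating the ODE $|\dot\gamma_1|\le 2\pi A\|U\|_\infty|\sin 2\pi(\gamma_1+\alpha)|+1$ after absorbing the drift of $\alpha$.

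The one genuine methodological difference is how the low-regularity phase $\alpha(t)$ is handled. You mollify $\alpha$ at scale $\epsilon=1/A$ and perform a time-varying shift $\tilde\gamma_1=\gamma_1+\alpha_\epsilon(T-s)$, balancing the phase error $A\epsilon^r$ against the velocity correction $\epsilon^{r-1}$, both landing at $O(A^{1-r})$. The paper instead uses a simple dichotomy: either $T\ge \log A/A$ and there is nothing to prove, or $T<\log A/A$, in which case Hölder continuity gives $|\alpha(t)-\alpha(0)|\le C(\log A/A)^r$ directly, so a \emph{constant} shift $s(t)=x_1(t)+\alpha(0)$ already yields $|\dot s|\le 1+CA|s-k|+C(\log A)^r A^{1-r}$. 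This avoids mollification entirely and is slightly more elementary; your approach, on the other hand, would extend more readily if one needed the bound to be uniform over arbitrarily long time windows without a preliminary case split. Both produce the same $O(A^{1-r})$ inhomogeneity in the traversal ODE and hence the same $\log A/A$ lower bound on crossing time.
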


\begin{proof}The proof is a modification of the proof for the steady cellular flow. Let us prove the upper bound (\ref{eq:time-upperbound}) for $p=(-1,0)$. The arguments for $p=(1,0), (0,1), (0,-1)$ are similar. The upper bound (\ref{eq:time-upperbound}) for all unit direction $p$ follows from the convexity and homogeneity of $s_{\rm T}(p,A)$ with respect to $p$. 
Suppose $\gamma(t)=(x_1(t),x_2(t)) \;:\; [0,T]\to \Rset^2$ is a Lipschitz continuous control path satisfying
      \begin{equation*}
      |\dot \gamma+AV(\gamma(t),t)|\leq 1 \quad \text{for a.e. $t\in [0,T]$}
\end{equation*}
and for some $k\in \Nset$.
$$
x_1(0)+\alpha(0)=k,  \quad x_1(T)+\alpha(T)=k+1
$$
Since $\alpha(t)$ is uniformly bounded, owing to (\ref{eq:control}), it is enough to show that 
$$
T\geq {C\log A\over A} \quad \text{for all $k\in \Nset$}.
$$

If $T\geq {\log A\over A}$, we are done. So let us assume that $T\leq {\log A\over A}$. Then for $t\in [0,T]$,
$$
|\alpha(t)-\alpha(0)|\leq \|\alpha(t)\|_{C^{0,r}(\Rset)}\left({\log A\over A}\right)^{r}
$$
Choose $A_0\geq 2$ such that when $A\geq A_0$
$$
|\alpha(t)-\alpha(0)|\leq {1\over 2}.
$$
Let $s(t)=x_1(t)+\alpha(0)$. Then $s(0)=k$ and $s(T)\geq k+{1\over 2}$.  Note that for a.e. $t\in [0,T]$
\begin{equation*}
\begin{aligned}
|\dot s(t)| &=|\dot x_1(t)|\leq 1+ \left|AC\sin (2\pi(x_1(t)+\alpha(t)))\right|\\
&\leq 1+AC|s(t)|+C(\log A)^rA^{1-r}.
\end{aligned}
\end{equation*}
Accordingly, when $A\geq A_0$
$$
T\geq \int_{0}^{{1\over 2}}{1\over 1+ACs+(\log A)^rA^{1-r}}\,ds\geq {C\log A\over A}.
$$
We then conclude with the desired result as before.
\end{proof}

\subsection{ABC and Kolmogorov flows} In this section, we review two representative examples of three-dimensional periodic incompressible flows that have chaotic structures that have been well studied in the literature. See \cite{CG} for more details. 

\begin{itemize}
  \item[(1)] \emph{Arnold-Beltrami-Childress (ABC) flow}. The flow has the form
$$
V(x)=(a \sin x_3 + c \cos x_2,\ b \sin x_1 + a \cos x_3,\ c \sin x_2 + b \cos x_1),
$$
for $x=(x_1,x_2,x_3)$ and three fixed parameters $a$, $b$ and $c$.  ABC flows are steady solutions to the Euler equation and serve as natural models of turbulent flows. It is  probably the most studied example in this class. 

\item[(2)] \emph{Kolmogorov flow}. This flow is a variant of the ABC by removing the cosine term:
$$
V(x)=(\sin x_3, \  \sin x_1,\  \sin x_2).
$$
\end{itemize}

By establishing the existence of unbounded periodic orbits and the control formulation (\ref{eq:control}),  it has been shown in \cite{KLX,XYZ}, that the turbulent flame speeds for both 1-1-1 ABC flow ($a=b=c\not=0$) and the Kolmogorov flow grow linearly along all directions, i.e.,
$$
\lim_{A\to +\infty}{s_{\rm T}(p,A)\over A}=c_p  \quad \text{for all unit vector $p\in \Rset^3$.}
$$
Here $c_p$ is a positive constant depending on $p$.

\begin{rmk}\label{rmk:residue} In turbulent combustion, an important area of study is the behavior of the residual front speeds as the laminar flame speed approaches zero \cite{MK2019}, such as in lean or low-reactivity fuel mixtures. In this limit, the residual front becomes purely transport-dominated, driven by turbulence 
rather than the finite propagation speed of a laminar flame. For inviscid G-equation, 
the corresponding limit is:  
\be\label{eq:residue-front}
\lim_{s_l\to 0}s_{T}(p)>0  \quad \text{for all unit vector $p\in \Rset^3$.}
\ee
 Here $s_{T}(p)$ is the effective burning velocity associated with
$$
s_l|p+Dw|+V(x)\cdot(p+Dw)=s_T(p). 
$$
By setting $s_l=1$, changing $V$ to $AV$ and dividing $A$ on both sides, we see that the above limit (\ref{eq:residue-front})  is equivalent to  the linear growth of $s_T$: 
$$
\lim_{A\to \infty}{s_T(p,A)\over A} > 0 \quad \text{for all unit vector $p\in \Rset^3$.}
$$
The residual propagation phenomenon (\ref{eq:residue-front}) is also known as propagation anomaly, see \cite{MK2019} for related discussion about the anomaly 
and its absence if Yakhot's formula holds.  In reaction-diffusion models, due to the presence of molecular diffusion, residual front speed behaviors in chaotic and random flows are more diverse and observable thanks to the implicit square root relation with residual diffusivity in passive scalar models (\cite{BGK_98,TDMF_99,LXY2019,DEIJ2022} and references therein). The residual reaction-diffusion front speed asymptotics in the small molecular diffusion regime have been studied computationally in \cite{ResKPP_2015,ResKPP_2022,ResKPP_2025}.  

\end{rmk}

\section{Conclusions}
For two-dimensional mean zero spatially Lipschitz and 
periodic incompressible flows with bounded
swirls, we ruled out intermediate front speed growth laws between $O(A/\log A)$ and $O(A)$ in the regime of large flow amplitude $A$ of the inviscid $G$-equation. 
Our proof, based on the control 
representation, takes into account multi-scale cellular structures of the flow 
away from stagnation points and estimates travel times of the 
control trajectories across the cells. 
With the stagnation points allowed to be degenerate in this work,
the number of cells and their scales may be infinite within one period, which is a major difficulty we overcame. Lipschitz regularity 
corresponds to Batchelor limit in smooth turbulence flow (\cite{BGK_98,Son_99} and references therein).
If the flow has slightly lower regularity, e.g. half log-Lipschitz continuous, 
we recover Yakhot's $O(A/\sqrt{\log A})$ growth law with a modified cellular flow although its physical meaning is not very clear. Our proof also extends to unsteady smooth cellular flows with Lagrangian chaos and yields the $O(A/\log A)$ upper bound on the front speed for the first time. Here, mixing alone without roughness does not exceed the $O(A/\log A)$ law.
\medskip

Our results suggest that 
H\"older regularity and strongly mixing properties of rough turbulence flows may lead to an  
intermediate growth law between $O(A/\log A)$ and $O(A)$.  
The former tends to push front speed
above $O(A/\log A)$ while the latter
rules out $O(A)$ speed up from channel-like structures
(e.g. ballistic orbits in ABC and Kolmogorov flows \cite{KLX,XYZ}). A mathematical proof or counterexample of the $O(A/\sqrt{\log A})$ growth law
in the rough turbulence regime \cite{BGK_98} remains to be discovered.

\section{Acknowledgements}
This work was partially supported by NSF grants DMS-2000191 and 
DMS-2309520.

\bibliographystyle{plain}

\end{document}